\begin{document}
	
\theoremstyle{plain}
\newtheorem{lemma}{Lemma}
\newtheorem{proposition}[lemma]{Proposition}
\newtheorem{corollary}[lemma]{Corollary}
\newtheorem{theorem}[lemma]{Theorem}
\newtheorem*{theorem*}{Theorem}
	
\theoremstyle{definition}
\newtheorem{definition}[lemma]{Definition}
\newtheorem{example}[lemma]{Example}

\theoremstyle{remark}
\newtheorem{remark}[lemma]{Remark}

\newcommand{\period}{\text{.}}
\newcommand{\comma}{\text{,}}

\newcommand{\rca}{\textsf{RCA}}
\newcommand{\wkl}{\textsf{WKL}}
\newcommand{\aca}{\textsf{ACA}}
\newcommand{\atr}{\textsf{ATR}}
\newcommand{\pica}{\Pi^1_1\textsf{-CA}_0}
\newcommand{\ads}{\textsf{ADS}}
\newcommand{\n}{\mathbb{N}}
\newcommand{\nplus}{\n \setminus \{0\}}
\newcommand{\set}{\textsf{Set}}
\newcommand{\po}{\textsf{PO}}
\newcommand{\wpo}{\textsf{WPO}}
\newcommand{\supp}{\textsf{supp}}
\newcommand{\rng}{\textsf{rng}}
\newcommand{\fin}{\textsf{fin}}

\newcommand{\red}[1]{\textcolor{red}{#1}}
\newcommand{\REF}{\red{REF}}
	
\title{The uniform Kruskal theorem over $\rca_0$}
\author[P. Uftring]{Patrick Uftring}

\address{Patrick Uftring, University of W\"urzburg, Institute of Mathematics, Emil-Fischer-Stra{\ss}e~40, 97074 W\"urzburg, Germany}
\email{patrick.uftring@uni-wuerzburg.de}
\thanks{This work has been funded by the Deutsche Forschungsgemeinschaft (DFG, German Research Foundation) -- Project number 460597863.}
\keywords{Dilators, uniform Kruskal theorem, arithmetical comprehension, chain antichain principle, ascending descending sequence principle, infinite pigeonhole principle, exponentiation, perfect sequence, well order, well partial order}

\begin{abstract}
	Kruskal's theorem famously states that finite trees (ordered using an infima-preserving embeddability relation) form a \emph{well partial order}. Freund, Rathjen, and Weiermann extended this result to general recursive data types with their \emph{uniform Kruskal theorem}. They do not only show that this principle is true but also, in the context of \emph{reverse mathematics}, that their theorem is equivalent to~${\Pi^1_1}$-comprehension, the characterizing axiom of~${\pica}$.
	
	However, their proof is not carried out directly over~${\rca_0}$, the usual base system of reverse mathematics. Instead, it additionally requires a weak consequence of Ramsey's theorem for pairs and two colors: the chain antichain principle.
	
	In this article, we show that this additional assumption is not necessary and the considered equivalence between the uniform Kruskal theorem and $\Pi^1_1$-comprehension already holds over~${\rca_0}$. For this, we improve Girard's characterization of arithmetical comprehension using ordinal exponentiation by showing that his result even remains correct if only a certain subclass of well orders is considered.
\end{abstract}

\maketitle

\section{Introduction}

Recursive data types are one of the most fundamental concepts in computer science and formal mathematics. Using a certain set of constructors (e.g., borrowing notation from type theory,~${0: X}$ and~${\texttt{succ}: X \to X}$), the resulting data type is given by all possible closed terms that they allow us to form (here, the resulting set~${X}$ corresponds to the natural numbers~${\n}$).

Order relations on these constructors can induce partial orders on the ensuing data types:
E.g., sequences~${L^*}$ with members from a partial order~${L}$ result from two constructors~${\texttt{empty}: X}$ and~${\texttt{cons}: L \to X \to X}$. Now, for any partial order~${X}$, we set~${\texttt{empty} \leq \texttt{empty}}$ and~${\texttt{cons}\ l\ x \leq \texttt{cons}\ k\ y}$ for~${l, k \in L}$ and~${x, y \in X}$ satisfying~${l \leq k}$ and~${x \leq y}$. This map from a partial order~${X}$ to the partial order of our constructors applied to~${X}$ will be called a \emph{\po-dilator} (in Definition~\ref{def:po_dilator}).
In order to define a partial order on our data type~${L^*}$, we consider the definition above (recursively) for~${X := L^*}$ and add a further relation~${x \leq \texttt{cons}\ l\ y}$ for~${l \in L}$ and~${x, y \in L^*}$ with~${x \leq y}$. The resulting order on~${L^*}$ is precisely the one employed in Higman's lemma (cf.~\cite{Higman}), i.e.~if~${L}$ is a \emph{well partial order} (see Definition~\ref{def:wpo}), then the same must hold for~${L^*}$.

As a further example, binary trees~${T}$ can be constructed using~${\texttt{empty}: X}$ as well as~${\texttt{pair}: X \to X \to X}$. Given a partial order~${X}$, our \po-dilator sets~${\texttt{empty} \leq \texttt{empty}}$ and~${\texttt{pair}\ x\ y \leq \texttt{pair}\ u\ v}$ for~${x, y, u, v \in X}$ with~${x \leq u}$ and~${y \leq v}$. We define an order on~${X := T}$ by applying these definitions and~${x \leq \texttt{pair}\ u\ v}$ for~${x, u, v \in T}$ with~${x \leq u}$ or~${x \leq v}$, recursively. The resulting order is given by infima-preserving embeddings as they are employed in Kruskal's theorem for binary trees (cf.~\cite[Theorem~1]{Kruskal}), i.e.~$T$ is a well partial order.

In reverse mathematics, we measure the strength of statements in second-order arithmetic over a weak base theory~${\rca_0}$ with restricted induction and comprehension axioms (see \cite{SimpsonBook}). By not only proving theorems from axioms but also axioms from theorems (hence, the name \emph{reverse} mathematics), we precisely determine the axioms that are required to prove a given theorem. E.g., while the result that~${L \mapsto L^*}$ preserves partial orders is quite weak and can already be derived in~${\rca_0}$, the statement that it preserves \emph{well} partial orders (i.e.~Higman's lemma) is not only provable using the axiom of \emph{arithmetical comprehension} but also, over~${\rca_0}$, this statement allows us to derive arithmetical comprehension itself (cf.~\cite[Theorem~3]{Clote}, which combines \cite[Section~II.5]{GirardExp} with \cite[Lemma~5.2]{SchuetteSimpson}, \cite[Sublemma~4.8]{SimpsonHilbert}). A similar analysis can be done for the order on binary trees from earlier (even though this particular example does not depend on any other partial order as input).

While many theorems from ordinary mathematics are equivalent to one of the \emph{Big Five}, i.e.~$\rca_0$,~${\wkl_0}$,~${\aca_0}$ (characterized by arithmetical comprehension),~${\atr_0}$, or~${\pica}$, there are also some outliers. E.g., already the statement that our second example of binary trees yields a well partial order does not match any of these systems precisely: It cannot be derived in~${\aca_0}$ but it is also not strong enough to imply the characterizing axiom of the next system~${\atr_0}$ (cf.~\cite{Schmidt75, Schmidt77}, also \cite[Section~6]{FreundReification}). Similarly, allowing arbitrary finite trees or introducing (as little as two) labels yields a principle that is not provable in~${\atr_0}$ (cf.~\cite[Theorem~2.8]{SimpsonNichtbeweisbarkeit, SimpsonNonprovability} and \cite[Theorem~14.11]{SvMW}). Even combining both modifications and even considering arbitrary well partial orders $L$ as labels, we stay strictly below~${\pica}$.\footnote{This version of Kruskal's theorem can be proved using Nash-Williams' minimal bad sequence argument (cf.~\cite[Theorem~1]{NashWilliams}), which is available in $\pica$ (cf.~\cite[Theorem~6.5]{Marcone}), but by its quantifier complexity, this statement must stay below $\pica$ (cf.~\cite[Theorem~VII.2.10]{SimpsonBook}, see \cite[Theorem~4.1.1]{UftringPhD} for more details).}

Previously, we used \po-dilators to extend orders on constructors to partial orders on their least fixed points. The \emph{uniform Kruskal theorem} due to Freund, Rathjen, and Weiermann now reveals that for the large subclass of \emph{normal \wpo-dilators} (see Definitions~\ref{def:po_dilator} and~\ref{def:normal}), the ensuing recursive data type is always a well partial order. Since there exist normal \wpo-dilators that let us construct the trees involved in Kruskal's theorem (i.e.~finitely branching trees with labels), the uniform Kruskal theorem can be seen as a generalization of this result. Freund, Rathjen, and Weiermann do not only show that the uniform Kruskal theorem is true but, in the context of reverse mathematics, they even prove the following characterization:

\begin{theorem*}[{\cite[Theorem~5.12]{FRWUKT}}]
	The system~${\rca_0}$ together with the \emph{chain antichain principle} proves an equivalence between
	\begin{enumerate}[label=\alph*)]
		\item~${\Pi^1_1}$-comprehension, the characterizing axiom of~${\pica}$, and
		\item the uniform Kruskal theorem.
	\end{enumerate}
\end{theorem*}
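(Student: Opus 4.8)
The plan is to prove both implications by shuttling between the world of partial orders, where the uniform Kruskal theorem is stated, and the linear world, where $\pica$ is characterized by Freund's Bachmann-Howard principle: over $\rca_0$, the axiom $\Pi^1_1$-comprehension is equivalent to the assertion that every normal dilator, in the classical linear sense, admits a well-founded Bachmann-Howard fixed point. The bridge between the two worlds is the elementary characterization that a partial order is a well partial order exactly when each of its linear extensions is a well order, and it is precisely here that the chain antichain principle will be consumed.

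I would first isolate this bridge as a lemma over $\rca_0$ together with the chain antichain principle. The implication ``well partial order $\Rightarrow$ every linear extension is a well order'' is immediate in $\rca_0$, since an infinite descending sequence in a linear extension would at once witness a bad pair in the underlying partial order. For the converse I argue contrapositively: from a bad sequence in the partial order, the chain antichain principle produces either an infinite descending chain, which already refutes well-orderedness of every linear extension, or an infinite antichain, whose reverse enumeration can be built into a linear extension that fails to be well founded.

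With the bridge lemma and Freund's characterization available, both implications follow. For b) $\Rightarrow$ a) I feed an arbitrary normal dilator into the uniform Kruskal theorem: through the canonical extension of dilators to partial orders it is a normal \wpo-dilator, and its fixed point is linearly ordered and coincides with its Bachmann-Howard fixed point. The uniform Kruskal theorem makes this fixed point a well partial order, which, being linear, is a well order and hence well founded; as the dilator was arbitrary, Freund's equivalence delivers $\pica$, and this direction needs no chain antichain principle. For a) $\Rightarrow$ b), let $D$ be a normal \wpo-dilator with fixed point $P$; by the bridge lemma it suffices to show that every linear extension of $P$ is a well order. Each such extension I would exhibit as embedded into the Bachmann-Howard fixed point of a normal dilator obtained by linearizing $D$, which is well founded by Freund's theorem under $\pica$; hence every linear extension of $P$ is a well order and $P$ is a well partial order.

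I expect the main obstacle to be exactly this last linearization step: verifying that the order on $P$ can be linearized in tandem with $D$ so that the resulting functor is again a \emph{genuine} normal dilator to which Freund's characterization applies, and that a prescribed linear extension of $P$ really does embed into the corresponding Bachmann-Howard fixed point. The remaining ingredients --- the easy half of the bridge lemma, the extension of a classical dilator to a \wpo-dilator, and the collapse of a linear well partial order to a well order --- are routine in $\rca_0$, and the chain antichain principle enters only through the hard half of the bridge lemma, that is, only in the direction a) $\Rightarrow$ b).
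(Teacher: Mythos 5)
The paper does not reprove this statement; it is quoted directly from Freund, Rathjen, and Weiermann, whose argument derives the uniform Kruskal theorem from $\Pi^1_1$-comprehension via Nash-Williams' minimal bad sequence method, and conversely reduces $\Pi^1_1$-comprehension to Freund's Bachmann-Howard principle by extending normal dilators on linear orders to normal \po-dilators. Your proposal shares the second half of that architecture, but it contains two genuine gaps. First, in your direction b) $\Rightarrow$ a) you assert that the canonical extension of a normal dilator to partial orders ``is a normal \wpo-dilator'' and that ``this direction needs no chain antichain principle.'' This is exactly backwards: verifying that the extension preserves well partial orders amounts to showing that $\alpha \times X$ is a well partial order for every well order $\alpha$ and well partial order $X$, which by Towsner's result is not provable in $\rca_0$ (its strength lies between \ads{} and the chain antichain principle). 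This is precisely where Freund, Rathjen, and Weiermann consume the chain antichain principle, and removing it from this direction is the entire content of the present paper (Sections~\ref{sec:ipp}--\ref{sec:boot}). Relatedly, the Kruskal fixed point of the extension is not linearly ordered and does not ``coincide'' with the Bachmann-Howard fixed point; one only obtains a quasi embedding of the latter into the former, which suffices for the argument but must actually be constructed.

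Second, your direction a) $\Rightarrow$ b) is not yet a proof. The bridge lemma itself is fine over $\rca_0$ plus the chain antichain principle (it is the Cholak--Marcone--Solomon equivalence of definitions of well partial orders), but the step you yourself flag as the main obstacle --- producing, for an arbitrary linear extension of the Kruskal fixed point of a normal \wpo-dilator, a genuine normal dilator on linear orders whose Bachmann-Howard fixed point embeds that extension --- is not a known construction. There is no canonical ``linearization'' of a \po-dilator, and verifying that any candidate preserves well orders would already require Higman/Kruskal-type facts of essentially the strength being proved. The standard proof of this direction avoids linearization entirely: one formalizes the minimal bad sequence argument in $\pica$ and applies it directly to the partial order underlying the Kruskal fixed point. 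As written, the hard half of your argument rests on this unresolved step.
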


Notice that this theorem not only assumes the axioms of our usual base system~${\rca_0}$ but also the chain antichain principle (cf.~\cite[p.~178]{HirschfeldtShore} and \cite[Question~13.8]{CJS}), a weak consequence of Ramsey's theorem for pairs and two colors. In \cite[Remark~2.7]{FUKruskal}, Freund and the author could show that this equivalence still holds if we replace the chain antichain principle with the \emph{ascending descending sequence principle} (cf.~\cite[p.~178]{HirschfeldtShore}), which is strictly weaker (cf.~\cite{LST}). Moreover, in the form of two conservativity results, they proved that such an additional assumption is, in fact, necessary to give weaker variants of the uniform Kruskal theorem strength (cf.~\cite[Theorems~3.4 and~3.9]{FUKruskal}).

In this article, we show that the equivalence between~${\Pi^1_1}$-comprehension and the uniform Kruskal theorem does \emph{not} require any further assumptions, i.e., we prove the following:

\begin{theorem}\label{thm:UKT_RCA}
	The system~${\rca_0}$ proves an equivalence between
	\begin{enumerate}[label=\alph*)]
		\item~${\Pi^1_1}$-comprehension and
		\item the uniform Kruskal theorem.
	\end{enumerate}
\end{theorem}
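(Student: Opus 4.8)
The plan is to isolate a single new implication and obtain the rest directly from the quoted theorem of Freund, Rathjen, and Weiermann. The starting point is the observation that $\aca_0$ already proves the chain antichain principle, since that principle is a consequence of Ramsey's theorem for pairs and two colors, and the latter is provable in $\aca_0$. Consequently $\rca_0$ together with $\Pi^1_1$-comprehension contains $\rca_0$ together with the chain antichain principle, so direction (a)$\Rightarrow$(b) of Theorem~\ref{thm:UKT_RCA} is immediate from \cite[Theorem~5.12]{FRWUKT}. For the reverse direction (b)$\Rightarrow$(a) the same observation performs a reduction: it suffices to prove, over $\rca_0$ alone, that the uniform Kruskal theorem implies $\aca_0$. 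Once $\aca_0$ is secured we again have the chain antichain principle (and, a fortiori, the ascending descending sequence principle) available, and \cite[Theorem~5.12]{FRWUKT} then upgrades the uniform Kruskal theorem to full $\Pi^1_1$-comprehension.

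The entire content of the theorem therefore concentrates in the implication from the uniform Kruskal theorem to $\aca_0$ over $\rca_0$, and this is precisely the step at which the chain antichain principle was previously consumed. I would go through ordinal exponentiation and Girard's characterization of $\aca_0$ (cf.~\cite{GirardExp, Clote}), namely that arithmetical comprehension is equivalent to the statement that $X \mapsto 2^X$ sends well orders to well orders. As in the recovery of Higman's lemma from the sequence dilator, a suitably chosen normal \wpo-dilator realizes this exponentiation, so that applying the uniform Kruskal theorem to it certifies that $2^X$ is a well partial order, and hence --- being linear whenever $X$ is --- a genuine well order, for the relevant inputs $X$. The new ingredient, established in the earlier part of the paper, is that Girard's reversal does not need all well orders as inputs but already succeeds on a certain subclass $\mathcal{C}$, chosen to contain exactly those well orders that can be delivered in this way without any Ramsey-type combinatorics. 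Passing the uniform Kruskal theorem through this improved characterization then yields $\aca_0$ with no appeal to the chain antichain principle.

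The main obstacle is the improved Girard characterization itself: showing that restricting the exponentiation hypothesis to the subclass $\mathcal{C}$ still forces $\aca_0$. In the usual argument one extracts, from a failure of well-foundedness of $2^X$, exactly the information needed to define the range of a prescribed injection, and the passage from the general partial-order setting to this well-founded-linear-order information is where a chain-or-antichain dichotomy is invoked. The task is thus to re-run the reversal so that the orders it produces are concrete enough for the descending information to be read off directly in $\rca_0$, while remaining expressible through a normal \wpo-dilator so that the uniform Kruskal theorem applies to them. One must verify simultaneously that $\mathcal{C}$ is closed under the constructions used in the reversal, that the associated exponentiation dilator is normal, and that analyzing its output needs no comprehension beyond $\rca_0$. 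The remaining bookkeeping --- normality, the \wpo-dilator axioms, and the routine implications between the weak combinatorial principles --- is standard and follows the earlier literature.
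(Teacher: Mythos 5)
Your reduction is the right one and matches the paper's: $\Pi^1_1$-comprehension yields the chain antichain principle (via Ramsey's theorem for pairs, available in $\aca_0$), so (a)$\Rightarrow$(b) follows at once from \cite[Theorem~5.12]{FRWUKT}, and for (b)$\Rightarrow$(a) everything reduces to deriving arithmetical comprehension from the uniform Kruskal theorem over $\rca_0$ and then re-applying that theorem. The paper also takes your second step: it proves (Theorem~\ref{thm:exp_perfect}) that Girard's reversal survives when the exponentiation hypothesis is restricted to well orders $\alpha$ in which every infinite suborder contains a strictly ascending sequence, and it checks that Hirst's tree $T_f$ lands in this class whenever the range of $f$ fails to exist as a set.

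There is, however, a genuine gap in the part you dismiss as ``standard bookkeeping''. The verification that the exponentiation dilator $W_\alpha(X) = 1 + (\alpha \times X)$ is a \wpo-dilator is exactly the step at which the earlier literature consumes the chain antichain principle (or the ascending descending sequence principle), and restricting $\alpha$ to the class above does \emph{not} by itself remove the combinatorics: given a bad sequence $(\beta_n, x_n)_{n \in \n}$ in $\alpha \times X$, one must distinguish the case where the codes of the $\beta_n$ are eventually bounded --- which requires the infinite pigeonhole principle for a possibly nonstandard number of colors, essentially $B\Sigma^0_2$, not available in $\rca_0$ --- from the case where they are unbounded, where the ascending-sequence property of $\alpha$ takes over. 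The paper closes this with a separate, non-routine argument (Section~\ref{sec:ipp}): it first derives the infinite pigeonhole principle from the uniform Kruskal theorem itself, using the auxiliary normal \wpo-dilator $V_\alpha(X) = \alpha + X$, a quasi embedding of $\alpha \times n$ into its Kruskal fixed point, and the equivalence of the infinite pigeonhole principle with the statement that $\alpha \times n$ is a well partial order for every well order $\alpha$ and $n \in \n$ (Lemma~\ref{lem:ipp_alpha_times_n}). Without this extra bootstrapping stage your plan stalls precisely where the chain antichain principle used to enter; you need either to supply it or to explain how $\rca_0$ alone shows that $W_\alpha$ preserves well partial orders for $\alpha$ in your class $\mathcal{C}$.
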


This reveals that the uniform Kruskal theorem has, over~${\rca_0}$, precisely the same strength as the closely related Bachmann-Howard fixed point theorem due to Freund (cf.~\cite{FreundPhD, FreundCategorical, FreundCAWellordering, FreundComputable}), which acts on linear orders instead of partial ones, and has already seen many fruitful applications (cf.~\cite{FreundFromKruskalToFriedman, FreundPredicative, FreundPatterns, FreundDerivative} including \cite{FRWUKT}, the work that this article is based on).

It should be emphasized that Theorem~\ref{thm:UKT_RCA} is not a minor technical extension of the original equivalence. In light of the already mentioned conservativity results involving a weakened uniform Kruskal theorem, it could very well have been the case that the full uniform Kruskal theorem \emph{also} does not posses any strength in the absence of the ascending descending sequence principle. Notice that without this principle, it is a priori not possible to apply this theorem to \po-dilators containing constructors of arity $2$ or higher.
See also \cite[Theorem~4.2 and Corollary~4.5]{FMPS} for a further example of principles that experience significant proof-theoretic strength (here, $\Pi^1_2$-comprehension) over a certain system (here, $\atr_0$) but fail to show any comparable strength over weaker base systems (here, $\aca_0$).
Finally, quoting personal communication with Anton Freund, ``proving the equivalence over $\rca_0$ alone has been considered an important goal, but previous attempts did not work out''.

In order to derive Theorem~\ref{thm:UKT_RCA}, we consider the following classical result due to Girard, which is used by Freund, Rathjen, and Weiermann to bootstrap~${\aca_0}$ using the uniform Kruskal theorem:
\begin{theorem*}[{\cite[Section~II.5]{GirardExp} and \cite[Theorem~2.6]{HirstExp}}]
	The system~${\rca_0}$ proves the following to be equivalent:
	\begin{enumerate}[label=\alph*)]
		\item arithmetical comprehension and
		\item the map~${\alpha \mapsto 2^\alpha}$ preserves well orders.
	\end{enumerate}
\end{theorem*}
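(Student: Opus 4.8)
The plan is to fix a standard representation of $2^\alpha$ and then prove each implication separately, using throughout the familiar fact (provable in $\rca_0$) that $\aca_0$ is equivalent to the statement that the range $\rng(f)$ of every injection $f \colon \n \to \n$ exists as a set. I represent an element of $2^\alpha$ as a finite, strictly $\alpha$-descending sequence $(a_1, \dots, a_n)$ of exponents, thought of as the base-$2$ Cantor normal form $2^{a_1} + \dots + 2^{a_n}$, and I order these sequences so that at the first coordinate of disagreement the larger exponent wins, while a proper extension of a sequence counts as larger than the sequence itself. With this coding, membership in $2^\alpha$ and the order on it are $\Delta^0_1$ in $\alpha$, so that $2^\alpha$ is a legitimate linear order already in $\rca_0$.

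For the direction (a)~$\Rightarrow$~(b) I work in $\aca_0$ and suppose, towards a contradiction, that $\sigma_0 >_{2^\alpha} \sigma_1 >_{2^\alpha} \cdots$ is an infinite descending sequence. Writing $e_i$ for the leading exponent of $\sigma_i$, the ordering forces $e_0 \geq_\alpha e_1 \geq_\alpha \cdots$; since a weakly $\alpha$-decreasing sequence in a well order is eventually constant (an infinite subsequence of strict decreases would contradict well-foundedness, and $\aca_0$ can extract it), the leading exponents stabilise at some $a_1$ from an index $i_1$ on. Stripping the common leading term $a_1$ from all $\sigma_i$ with $i \geq i_1$ leaves an infinite descending sequence in $2^{\alpha \restriction a_1}$, whose leading exponents are now $<_\alpha a_1$ and stabilise at some $a_2 <_\alpha a_1$. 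Iterating this construction by arithmetical recursion yields an infinite $\alpha$-descending sequence $a_1 >_\alpha a_2 >_\alpha \cdots$, contradicting that $\alpha$ is a well order. The only set existence used is the arithmetical recursion that records the successive stabilisation points, which is exactly what $\aca_0$ provides.

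For the reversal (b)~$\Rightarrow$~(a) I start from an injection $f \colon \n \to \n$ and build a linear order $\alpha_f$ that $\rca_0$ itself proves to be a well order, yet whose exponential encodes the range of $f$. The well-foundedness of $\alpha_f$ should be forced structurally rather than by its order type: presenting $\alpha_f$ as an $\omega$-indexed sum $\sum_n \gamma_n$ in which each block $\gamma_n$ is a standard well order makes any descending sequence first stabilise its block index (a weakly decreasing sequence of naturals is eventually constant, provably in $\rca_0$) and then descend inside a single fixed block, which $\rca_0$ already excludes; hence $\rca_0 \vdash$ ``$\alpha_f$ is a well order''. The information about $f$ is threaded through the sizes and internal markers of the blocks, so that the finite Cantor normal forms in $2^{\alpha_f}$ act as bookkeeping devices for the queries ``$n \in \rng(f)$?''. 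The hypothesis then supplies that $2^{\alpha_f}$ is a well order, and from a well-founded $2^{\alpha_f}$ I read off $\rng(f)$ as a set, whence $\aca_0$.

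The main obstacle is precisely this coding in the reversal: designing $\alpha_f$ so that, simultaneously, $\rca_0$ can verify its well-foundedness and the well-foundedness of $2^{\alpha_f}$ is strong enough to decide membership in $\rng(f)$. The delicate point is to arrange that a failure to enumerate the range manifests itself as a genuine infinite descending sequence in $2^{\alpha_f}$, and not merely in $\alpha_f$ where $\rca_0$ already forbids it, so that the amplification provided by exponentiation is what carries the proof-theoretic content. I expect the verification that the extracted object is really a descending sequence, together with the bookkeeping that turns a well-founded $2^{\alpha_f}$ into the characteristic function of $\rng(f)$, to be the technical heart of the argument; this is the content of Girard's and Hirst's original constructions.
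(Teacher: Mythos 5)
Your forward direction (arithmetical comprehension implies preservation of well orders) is essentially the standard stabilisation-of-leading-exponents argument and is fine, with one caveat: the stabilisation points should be obtained from a \emph{single} arithmetical definition, e.g.\ $g(k) := \mu n.\,\forall m \geq n\,\forall j < k\,\bigl((\sigma_m)_j = (\sigma_n)_j\bigr)$, followed by arithmetical induction showing that $g$ is total. Phrasing it as an ``arithmetical recursion'' that feeds each stage into the next is dangerous, since $\aca_0$ does not in general iterate arithmetical operations along $\n$; it does not need to here, because the $k$-th stabilisation point is uniformly arithmetical in the original descending sequence.

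The reversal is a genuine gap: the construction of $\alpha_f$ is the entire mathematical content of that implication, and you explicitly defer it to ``Girard's and Hirst's original constructions''. Worse, the one structural commitment you do make points the wrong way. In the construction actually used (and reworked in Section~\ref{sec:exp} of this paper), the auxiliary order is a tree $T_f$ under the Kleene--Brouwer order; the order $2^{T_f}$ is \emph{always} ill-founded (Lemma~\ref{lem:2_T_f_ill_founded}), while $T_f$ itself is well-founded precisely when $\rng(f)$ fails to exist as a set --- indeed, if $\rng(f)$ does exist, then $T_f$ has an explicit infinite branch and hence an infinite descending sequence. So $\rca_0$ emphatically does \emph{not} prove ``$T_f$ is a well order'' outright, contrary to your requirement that $\rca_0 \vdash$ ``$\alpha_f$ is a well order''. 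This matters because your final step, ``from a well-founded $2^{\alpha_f}$ I read off $\rng(f)$ as a set'', has no mechanism behind it: well-foundedness is a $\Pi^1_1$ hypothesis, and the only way to extract a set from it is by contraposition, that is, by showing that non-existence of $\rng(f)$ produces an infinite descending sequence somewhere. Once the argument is set up that way, the case distinction on whether $\rng(f)$ exists reappears, the unconditional well-foundedness of $\alpha_f$ becomes both unavailable and unnecessary, and the $\omega$-sum-of-blocks picture (any descending sequence stabilises its block index and then dies inside one standard block) no longer describes the order you need. To complete the proof you must actually exhibit $T_f$, check that it is a linear order, show that failure of $\rng(f)$ to exist as a set forces $T_f$ to be well-founded, and construct the explicit infinite descending sequence in $2^{T_f}$; none of these steps is routine, and none is present in your proposal.
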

Using Kruskal fixed points of a certain class of normal \po-dilators, one can show~b), which leads to arithmetical comprehension by this theorem. However, an application of the uniform Kruskal theorem, which is used to obtain fixed points that are well partial orders, requires the employed \po-dilators to already be \emph{\wpo-dilators}. For this, the chain antichain principle is used. While the ascending descending sequence principle suffices, as mentioned earlier, it can be seen that $\rca_0$ on its own is not strong enough to show that this class only consists of \wpo-dilators. Therefore, in this article, we consider and prove the following variation:
\begin{theorem}\label{thm:exp_perfect}
	The system~${\rca_0}$ proves the following to be equivalent:
	\begin{enumerate}[label=\alph*)]
		\item arithmetical comprehension and
		\item the map~${\alpha \mapsto 2^\alpha}$ preserves well orders~${\alpha}$ for which each infinite suborder contains a strictly ascending sequence.
	\end{enumerate}
\end{theorem}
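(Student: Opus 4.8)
The plan is to refine the classical equivalence of Girard and Hirst (\cite{GirardExp, HirstExp}) by checking that both of its directions survive the restriction to the smaller class of well orders named in~b). The implication from~a) to~b) is immediate: arithmetical comprehension proves that $\alpha \mapsto 2^\alpha$ preserves \emph{all} well orders, and the orders considered in~b) form a subclass of these, so the restricted preservation statement is a logical consequence of the unrestricted one. Hence essentially all of the work lies in the reversal, the implication from~b) to~a).

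For the reversal I would argue in $\rca_0$ together with~b) and, following the standard strategy, derive arithmetical comprehension in the form ``the range of every injection $f \colon \n \to \n$ exists as a set''. Given such an $f$, Hirst's reversal (\cite[Theorem~2.6]{HirstExp}) produces a linear order $\alpha_f$, uniformly and provably a well order in $\rca_0$, with the property that whenever $2^{\alpha_f}$ is a well order the set $\rng(f)$ can be defined. The new ingredient is to verify, in $\rca_0$, that $\alpha_f$ actually lies in the restricted class, i.e.\ that every infinite suborder of $\alpha_f$ contains a strictly ascending sequence; once this is done, b) applies to $\alpha_f$, yields that $2^{\alpha_f}$ is a well order, and the reversal then furnishes $\rng(f)$. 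Since $f$ was arbitrary, arithmetical comprehension follows.

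To establish membership of $\alpha_f$ in the restricted class I would exploit the concrete shape of the reversal order: it can be presented as an $\omega$-indexed sum $\alpha_f = \sum_{n \in \n} \beta_n$ of \emph{finite} blocks, with the block to which a given point belongs computed by a function available in $\rca_0$. Given an infinite suborder $S \subseteq \alpha_f$, finiteness of the blocks forces $S$ to meet blocks of arbitrarily large index, since otherwise $S$ would be contained in a finite union of finite blocks and hence finite. One can then define a strictly ascending sequence in $S$ by primitive recursion: having chosen a point of $S$ in block $b$, search through $S$ for a point lying in a block of strictly larger index; such a point exists, so the search terminates, and the whole construction goes through using only primitive recursion and the $\Sigma^0_1$-induction of $\rca_0$.

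I expect the main obstacle to be this last verification, for two reasons. First, one must confirm that the reversal order really can be equipped with an explicit, $\rca_0$-definable decomposition into finite blocks while still detecting $\rng(f)$ through the well-foundedness of $2^{\alpha_f}$; this requires looking past the mere existence of the order to its effective structure, and may force a mild reshaping of the classical construction. Second, the extraction of the ascending sequence must be carried out with the weak comprehension and induction of $\rca_0$ alone, so the argument that ``an infinite set meeting finite blocks meets infinitely many of them'' and the attendant bounded searches have to be arranged to stay within the means of $\rca_0$, rather than appealing, as one naively would, to the ascending descending sequence principle that the present article is precisely trying to avoid.
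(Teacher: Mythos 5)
The direction from~a) to~b) is indeed immediate, and revisiting the Girard--Hirst reversal is the right starting point; but your setup of the reversal inverts the logical structure of that argument, and the resulting plan cannot be carried out. You propose to build from an injection $f$ an order $\alpha_f$ that is \emph{provably in $\rca_0$} a well order lying in the restricted class, to apply~b) and conclude that $2^{\alpha_f}$ is a well order, and then to extract $\rng(f)$ from that conclusion. No construction of this shape can work: if $\rca_0$ proves that $\alpha_f$ is a well order, then $\alpha_f$ genuinely is one, hence so is $2^{\alpha_f}$, and therefore the $\Pi^1_1$-statement ``$2^{\alpha_f}$ is a well order'' holds in \emph{every} $\omega$-model containing $f$ --- in particular in the $\omega$-model whose sets are exactly those computable from $f$. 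Taking $f$ to be a computable injection whose range is not computable, that model satisfies ``$2^{\alpha_f}$ is a well order'' but not ``the range of $f$ exists'', so the extraction step you attribute to Hirst's reversal is not provable in $\rca_0$. The proposed presentation of $\alpha_f$ as an $\omega$-indexed sum of finite blocks only reinforces the problem, since it makes $\alpha_f$, and hence $2^{\alpha_f}$, genuinely well ordered --- which is precisely what must be avoided.

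The actual reversal runs the other way. One builds a linear order $T_f$ (the Kleene--Brouwer ordering of a tree of partial guesses at the range of $f$, with witnesses) such that, provably in $\rca_0$: (i) $2^{T_f}$ is \emph{always} ill-founded, and (ii) \emph{under the assumption that the range of $f$ does not exist as a set}, $T_f$ is a well order. Applying~b) to $T_f$ then contradicts (i), so the range must exist. The new content of the present theorem is the verification that, still under the assumption that the range does not exist, every infinite suborder of $T_f$ contains a strictly ascending sequence, so that the weakened hypothesis~b) applies to it. That verification is itself essentially conditional --- an infinite subsequence that fails to ascend would allow one to compute the range of $f$ --- and it proceeds through the near-binary structure of the tree (injectivity of $f$ forces positive entries to agree) and a notion of weak extension of sequences, not through a decomposition into finite blocks. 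So the ``main obstacle'' you identify is real, but it cannot be resolved unconditionally as you intend: the witnessing order must fail to be a well order exactly when the set you are trying to construct exists.
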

Now, this restricts the class of \po-dilators that we have to consider for deriving~b). In fact, already~${\rca_0}$ itself can be used to prove that any element of this restriction is a \wpo-dilator. In conclusion, we arrive at Theorem~\ref{thm:UKT_RCA}.

Our construction proceeds in three parts: In Section~\ref{sec:ipp}, we show that the uniform Kruskal theorem yields the infinite pigeonhole principle. In Section~\ref{sec:exp}, we derive Theorem~\ref{thm:exp_perfect}. In Section~\ref{sec:boot}, we combine these results in order to prove Theorem~\ref{thm:UKT_RCA}. Additionally, in Section~\ref{sec:wkl}, we use the main result of Section~\ref{sec:ipp} to show that there is no \po-dilator that, over~${\rca_0}$, becomes a \wpo-dilator if and only if weak K\H{o}nig's lemma holds.
The results of this article correspond to Chapter~5 of the author's PhD thesis (cf.~\cite{UftringPhD}).

\subsection*{Acknowledgments}

I would like to thank my PhD advisor Anton Freund and I am grateful for the funding by the Deutsche Forschungsgemeinschaft (DFG, German Research Foundation) -- Project number 460597863.

\section{Preliminaries}

Well partial orders generalize the concept of well orders from linear orders to the class of partial orders:
\begin{definition}\label{def:wpo}
	Let~${X}$ be a partial order. A sequence~${(x_n)_{n \in \n} \subseteq X}$ is called \emph{good} if there are indices~${n < m}$ with~${x_n \leq x_m}$. Otherwise, it is called \emph{bad}. We say that~${X}$ is a \emph{well partial order} if and only if any such sequence is good.
	
	A function~${f: X \to Y}$ between two partial orders is a \emph{quasi embedding} if it \emph{reflects} the order, i.e.~$f(x) \leq f(x')$ implies~${x \leq x'}$ for any two elements~${x, x' \in X}$. It is an \emph{embedding} if it additionally \emph{preserves} the order, i.e.~$x \leq x'$ implies~${f(x) \leq f(x')}$ for any two elements~${x, x' \in X}$.
	
	We write \po{} and \wpo{} for the categories of partial orders and well partials orders, respectively (with quasi embeddings as morphisms).
\end{definition}
Sometimes, we like to compare finite subsets of a partial order as follows:
\begin{definition}
	Let~${X}$ be a partial order. For any two finite subsets~${F, G \subseteq X}$, we write~${F \leq_\fin G}$ if and only if for any element~${x \in F}$, there is some~${y \in G}$ with~${x \leq y}$.
	In case of a singleton set~${F = \{x\}}$, we may also write~${x \leq_\fin G}$.
\end{definition}
Multiple partial orders can be combined to new ones using the following operators:
\begin{definition}
	Let~${X}$ and~${Y}$ be partial orders.
	
	We define the \emph{sum}~${X + Y}$ to be the set consisting of pairs~${(0, x)}$ and~${(1, y)}$ for~${x \in X}$ and~${y \in Y}$. We order the elements of~${X + Y}$ by setting~${(0, x) \leq (0, x')}$ if and only if~${x \leq x'}$ holds and~${(1, y) \leq (1, y')}$ if and only if~${y \leq y'}$ holds.
	
	We define the \emph{product}~${X \times Y}$ to be the set consisting of pairs~${(x, y)}$ for~${x \in X}$ and~${y \in Y}$. For any two such pairs~${(x, y), (x', y') \in X \times Y}$, we have~${(x, y) \leq (x', y')}$ if and only if both~${x \leq x'}$ and~${y \leq y'}$ hold.
\end{definition}
Later, in the proof of Lemma~\ref{lem:ipp_unary}, we will refer to the elements~${(0, (0, x))}$,~${(0, (1, y))}$, and~${(1, z)}$ for~${x \in X}$,~${y \in Y}$, and~${z \in Z}$ of a sum~${X + Y + Z}$ (assuming left associativity) by~${(0, x)}$,~${(1, y)}$,~${(2, z)}$ in order to ease notation.

In~${\rca_0}$, it is easy to see that~${X + Y}$ and~${X \times Y}$ both are partial orders if the same already holds for~${X}$ and~${Y}$. Moreover, the system~${\rca_0}$ proves that the sum~${X + Y}$ is a well partial order if we can assume the same for~${X}$ and~${Y}$. This can easily be seen using the infinite pigeonhole principle for two colors, which is available in~${\rca_0}$. While a similar result is \emph{true} for products, the statement that~${X \times Y}$ is a well partial order if the same holds for~${X}$ and~${Y}$ is no longer provable in~${\rca_0}$. In fact, by a result due to Towsner (cf.~\cite{Towsner}), its strength lies strictly between that of the ascending descending sequence principle and the chain antichain principle.

\begin{definition}
	The functor~${[\cdot]^{<\omega}: \set \to \set}$ is maps each set~${S}$ to the collection~${[S]^{<\omega}}$ of all finite subsets of~${S}$.
	Given a map~${f: S \to T}$ between two sets, we have~${[f]^{<\omega}(F) := f(F)}$ for each finite subset~${F \subseteq S}$.
\end{definition}

\begin{definition}[{\cite[Definition~2.2]{FRWUKT}}]\label{def:po_dilator}
	A functor~${W: \po \to \po}$ together with a natural transformation\footnote{We are omitting the forgetful functor~${F: \po \to \set}$. To be precise, our natural transformation has the signature~${\supp: F \circ W \Rightarrow [\cdot]^{<\omega} \circ F}$.}~${\supp: W \Rightarrow [\cdot]^{<\omega}}$ is a \emph{\po-dilator} if and only if
	\begin{enumerate}[label=\roman*)]
		\item it preserves embeddings and
		\item the following \emph{support condition}
		\begin{equation*}
			\rng(W(f)) = \{\sigma \in W(Y) \mid \supp_Y(\sigma) \subseteq \rng(f)\}
		\end{equation*}
		is satisfied for any embedding~${f: X \to Y}$.
	\end{enumerate}
	We call~${W}$ a \wpo-dilator if it preserves well partial orders.
\end{definition}
Since \po-dilators preserve embeddings, the support condition allows them to be defined by their behavior on finite partial orders. This lets us code all \po-dilators that are required for our purposes using sets in the context of second-order arithmetic (cf.~\cite[Theorem~2.11]{FRWUKT}) similar to predilators on linear orders (cf.~\cite[Corollary~2.1.8]{Girard}). We leave this coding opaque and refer to (\cite[Definitions~2.2 and~2.4]{FRWUKT}) where an additional distinction is made between \emph{class-sized} \po-dilators (i.e.~Definition~\ref{def:po_dilator}) and \emph{coded} \po-dilators (their actual representation as sets in second-order arithmetic).

The definition of Kruskal fixed points is restricted to a certain class of \po-dilators:
\begin{definition}\label{def:normal}
	A \po-dilator~${W: \po \to \po}$ with support functions~${\supp}$ is \emph{normal} if and only if for any partial order~${X}$ and any two elements~${\sigma, \tau \in W(X)}$ with~${\sigma \leq \tau}$, we have~${\supp_X(\sigma) \leq_\fin \supp_X(\tau)}$.
\end{definition}

Most \po-dilators that we are interested in satisfy this property of normality. See \cite[p.~5]{FRWUKT} for a discussion. In particular, it seems to be required for proving that the construction of the canonical Kruskal fixed point yields a partial order (cf.~\cite[Proposition~3.6]{FRWUKT}).

\begin{definition}[{\cite[Definition~3.7]{FRWUKT}}]
	Consider a normal \po-dilator~${W}$. A \emph{Kruskal fixed point} of~${W}$ is given by a partial order~${X}$ with a bijection~${\kappa: W(X) \to X}$ such that~${\kappa(\sigma) \leq \kappa(\tau)}$ holds if and only if one of
	\begin{enumerate}[label=\roman*)]
		\item~${\sigma \leq \tau}$, or
		\item~${\kappa(\sigma) \leq_\fin \supp_X(\tau)}$
	\end{enumerate}
	does, for any two~${\sigma, \tau \in W(X)}$.
\end{definition}

Now, everything is prepared for stating the principle whose strength over~${\rca_0}$ we aim to determine precisely in this article:

\begin{theorem}[Uniform Kruskal Theorem]
	Any normal \wpo-dilator has a Kruskal fixed point that is a well partial order.
\end{theorem}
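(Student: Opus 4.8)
The plan is to proceed in two stages: first construct a canonical Kruskal fixed point and check that it is a partial order, and then establish the well partial order property by a minimal bad sequence argument. Since the present article is precisely about showing that the uniform Kruskal theorem matches the strength of $\Pi^1_1$-comprehension, a \emph{direct} proof of the statement is naturally carried out in $\pica$. For the existence stage, I would build $X$ as the set of closed terms obtained by iterating the constructor $\kappa$ over $W$, so that $\kappa\colon W(X) \to X$ becomes a bijection. That the relation defined by clauses i) and ii) is then a partial order is exactly \cite[Proposition~3.6]{FRWUKT}, where normality is what secures antisymmetry and transitivity; I would simply invoke this rather than reprove it.

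The heart of the matter is showing that $X$ is a well partial order. Each $x \in X$ equals $\kappa(\sigma)$ for a unique $\sigma \in W(X)$, and every element of $\supp_X(\sigma)$ is a proper subterm of $x$, so $X$ carries a well-founded rank given by term height. Assume toward a contradiction that $X$ admits a bad sequence. Using this well-founded rank together with $\Pi^1_1$-comprehension, I would extract a \emph{minimal} bad sequence $(x_n)_{n \in \n}$ in the sense of Nash-Williams: choose $x_0$ of least rank among elements that begin some bad sequence, and, given $x_0, \dots, x_{n-1}$, choose $x_n$ of least rank subject to $x_0, \dots, x_n$ extending to a bad sequence.

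Writing $x_n = \kappa(\sigma_n)$, set $Y := \bigcup_{n \in \n} \supp_X(\sigma_n)$, a suborder of $X$ each of whose elements has rank strictly below that of some $x_n$. The key claim is that $Y$ is a well partial order: any bad sequence drawn from $Y$ could be spliced onto a suitable initial segment of $(x_n)$ so that the first newly inserted term has strictly smaller rank than the $x_n$ it replaces, contradicting minimality. Granting the claim, the \wpo-dilator hypothesis yields that $W(Y)$ is a well partial order. Since $\supp_X(\sigma_n) \subseteq Y$, the support condition of Definition~\ref{def:po_dilator} places each $\sigma_n$ in the range of $W(\iota)$ for the inclusion embedding $\iota\colon Y \hookrightarrow X$, hence identifies it with an element of $W(Y)$; as $W$ preserves embeddings, the order is transferred faithfully. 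Thus $(\sigma_n)$ is good in $W(Y)$, giving $\sigma_m \leq \sigma_n$ for some $m < n$, and clause i) of the Kruskal fixed point definition forces $x_m = \kappa(\sigma_m) \leq \kappa(\sigma_n) = x_n$, contradicting the badness of $(x_n)$.

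The main obstacle is the extraction of the minimal bad sequence: selecting, at each stage, a rank-least term that still extends to a bad sequence is a $\Pi^1_1$ condition, and formalizing this selection is exactly what necessitates $\Pi^1_1$-comprehension, which is the very feature underlying the equivalence this article is concerned with. A secondary technical point is the verification that $Y$ is a well partial order: the splicing must be arranged so that the well-founded rank strictly decreases, and clause ii) together with normality must be used to rule out the unwanted inequalities $x_i \leq z$ between earlier terms and inserted support elements, so that the spliced sequence is genuinely bad.
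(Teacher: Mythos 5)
The paper does not actually prove this statement: it is the principle under investigation, and the only direction the paper needs (from $\Pi^1_1$-comprehension) is obtained by citing \cite[Theorem~5.12]{FRWUKT}. Your sketch is essentially the proof given in that reference -- and anticipated in the paper's own footnote on Nash--Williams -- namely the minimal bad sequence argument carried out in $\pica$ on the canonical term-built fixed point, with $Y$ the union of the supports and the support condition pulling the $\sigma_n$ back into $W(Y)$; the argument is correct as outlined. One small imprecision: to see that the spliced sequence is bad, the inequality $x_i \leq z$ for $z \in \supp_X(\sigma_m)$ is excluded by clause~ii) of the Kruskal fixed point definition alone (it gives $x_i \leq_\fin \supp_X(\sigma_m)$ and hence $x_i \leq x_m$, contradicting badness of $(x_n)_{n \in \n}$); normality is needed earlier, to guarantee via \cite[Proposition~3.6]{FRWUKT} that the fixed point is a partial order at all.
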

In \cite[Theorem~5.12]{FRWUKT}, the uniform Kruskal theorem is defined by a different formulation using a computable transformation~${\mathcal{T}}$ that maps normal (coded) \po-dilators to canonical notation systems of their (initial) Kruskal fixed points. However, in this article, we like to avoid the required coding and, therefore, define the uniform Kruskal theorem as statement \cite[Theorem~5.12~(3)]{FRWUKT}. In light of \cite[Theorem~3.8]{FRWUKT} both formulations are equivalent over~${\rca_0}$.

Let us repeat the characterization of this principle in the context of reverse mathematics due to Freund, Rathjen, and Weiermann:

\begin{theorem}[{\cite[Theorem~5.12]{FRWUKT}}]\label{thm:UKT_CAC}
	The system~${\rca_0}$ together with the chain antichain principle proves an equivalence between
	\begin{enumerate}[label=\alph*)]
		\item~${\Pi^1_1}$-comprehension and
		\item the uniform Kruskal theorem.
	\end{enumerate}
\end{theorem}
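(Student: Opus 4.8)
The plan is to prove the two implications of the equivalence separately, working throughout over $\rca_0$ augmented by the chain antichain principle. The forward implication will need nothing beyond $\pica$ itself, whereas the chain antichain principle does genuine work only in one stage of the reverse implication.

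For the direction from (a) to (b) I would argue inside $\pica$, where the chain antichain principle is automatically available, since $\Pi^1_1$-comprehension proves arithmetical comprehension and, a fortiori, the chain antichain principle. Fix a normal \wpo-dilator $W$ and a Kruskal fixed point $X$ with collapse $\kappa\colon W(X)\to X$; the goal is to show that $X$ is a well partial order. Supposing otherwise, $X$ carries a bad sequence, and I would run the Nash-Williams minimal bad sequence argument (cf.~\cite{NashWilliams}, \cite[Theorem~6.5]{Marcone}), using $\Pi^1_1$-comprehension to perform the transfinite minimization that produces a bad sequence $(\kappa(\sigma_n))_{n}$ of least possible complexity. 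The elements occurring in the supports $\supp_X(\sigma_n)$ are term-theoretically simpler than the $\kappa(\sigma_n)$, so by minimality the suborder $X_0 \subseteq X$ they generate is a well partial order; moreover the support condition places each $\sigma_n$ in the range of $W(\iota)$ for the inclusion $\iota\colon X_0 \to X$. Applying the \wpo-dilator property of $W$ to $X_0$ yields indices $i<j$ with $\sigma_i \leq \sigma_j$, and the first defining clause of a Kruskal fixed point then gives $\kappa(\sigma_i) \leq \kappa(\sigma_j)$, contradicting badness. The only reverse-mathematical content here is the existence of the minimal bad sequence, which is exactly what $\pica$ supplies.

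For the direction from (b) to (a) I would proceed in two stages. The first stage bootstraps arithmetical comprehension along the lines indicated in the introduction: for each well order $\alpha$ I would design a normal \po-dilator $W_\alpha$ whose canonical Kruskal fixed point codes the exponential $2^\alpha$, arranged so that well-partial-orderedness of this fixed point forces $2^\alpha$ to be a well order. Before $W_\alpha$ can be fed into the uniform Kruskal theorem it must be recognized as a \wpo-dilator, and since its constructors have arity at least $2$ this amounts to a product-type preservation statement for well partial orders; this is the point at which I would invoke the chain antichain principle. Once $W_\alpha$ is certified, the uniform Kruskal theorem produces a well-founded fixed point, hence $2^\alpha$ is a well order, and Girard's characterization (\cite[Section~II.5]{GirardExp}, \cite[Theorem~2.6]{HirstExp}) yields arithmetical comprehension. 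With arithmetical comprehension now in hand --- which, in particular, reproves the chain antichain principle and so frees the remaining argument from the hypothesis --- I would complete the passage to full $\Pi^1_1$-comprehension through the linear-order theory of dilators: to a Girard dilator $T$ one associates a normal \po-dilator which over $\aca_0$ can be shown to be a \wpo-dilator, whose Kruskal fixed point is the partial-order counterpart of the Bachmann-Howard fixed point $\vartheta(T)$, so that well-partial-orderedness of the former transfers to well-foundedness of the latter. Running the uniform Kruskal theorem for every such $T$ then shows that every dilator has a well-founded Bachmann-Howard fixed point, a well-ordering principle that Freund has shown to be equivalent to $\Pi^1_1$-comprehension (cf.~\cite{FreundCAWellordering}), closing the equivalence.

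The step I expect to be the main obstacle is the \wpo-dilator certification in the first stage of the reverse direction. Before arithmetical comprehension is available, $\rca_0$ cannot prove that a product of two well partial orders is again a well partial order (cf.~\cite{Towsner}), and so it cannot on its own verify that the exponentiation dilators $W_\alpha$ preserve well partial orders; this is precisely the gap that the chain antichain principle is assumed to fill, and it is the one place where the hypothesis is genuinely used. It is also exactly the obstruction that Theorem~\ref{thm:exp_perfect} is designed to bypass: restricting the input well orders to those whose infinite suborders always contain a strictly ascending sequence makes the relevant preservation property provable in $\rca_0$ alone, which is what will ultimately allow the extra hypothesis to be dropped in Theorem~\ref{thm:UKT_RCA}.
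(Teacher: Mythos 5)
This theorem is not proved in the paper at all: it is quoted verbatim from Freund, Rathjen, and Weiermann \cite[Theorem~5.12]{FRWUKT} and used as a black box in the proof of Theorem~\ref{thm:UKT_RCA}, so there is no internal argument to compare your attempt against. Measured against the original proof in \cite{FRWUKT}, your reconstruction is faithful in its overall architecture. The direction from $\Pi^1_1$-comprehension to the uniform Kruskal theorem does go through a Nash-Williams minimal bad sequence argument, with $\Pi^1_1$-comprehension supplying the minimal bad sequence via Marcone's analysis \cite{Marcone}; the one point your sketch glosses over is that one must first build the \emph{initial} Kruskal fixed point (an $\rca_0$-construction, \cite[Theorem~3.8]{FRWUKT}), since the minimization over ``term-theoretic complexity'' only makes sense when $X$ is generated in a well-founded way by $\kappa$, which an arbitrary fixed point need not be. The converse direction does split exactly as you describe: first bootstrap arithmetical comprehension using the exponentiation dilators $W_\alpha(X)=1+(\alpha\times X)$ of Definition~\ref{def:W_alpha} together with Girard's characterization \cite{GirardExp, HirstExp}, then reduce $\Pi^1_1$-comprehension to the Bachmann-Howard well-ordering principle of \cite{FreundCAWellordering} by transporting linear dilators to normal \po-dilators whose Kruskal fixed points control the corresponding $\vartheta$-style collapses. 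You also locate the role of the chain antichain principle correctly: it certifies that products such as $\alpha\times X$ remain well partial orders, which $\rca_0$ alone cannot prove by Towsner's result \cite{Towsner}, and this is precisely the gap that Theorem~\ref{thm:exp_perfect} and Lemma~\ref{lem:UKT_ACA} are designed to close so that the hypothesis can be dropped in Theorem~\ref{thm:UKT_RCA}. In short, your proposal is a reasonable reconstruction of the cited external proof rather than an alternative to anything in this article.
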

As already mentioned in the introduction, it is the main purpose of this article to show Theorem~\ref{thm:UKT_RCA}, i.e.~that the equivalence of Theorem~\ref{thm:UKT_CAC} already holds over~${\rca_0}$ even in the absence of the chain antichain principle.

\section{Deriving the infinite pigeonhole principle}\label{sec:ipp}

The first step in this direction is given by the following result:

\begin{proposition}\label{prop:ukt_ipp}
	The system~${\rca_0}$ proves that the uniform Kruskal theorem implies the infinite pigeonhole principle.
\end{proposition}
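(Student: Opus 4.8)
The plan is to derive the infinite pigeonhole principle from the uniform Kruskal theorem by \emph{contraposition}, converting a failure of pigeonholing into a failure of well partial orderedness of a suitable Kruskal fixed point. Suppose we are given a coloring $f\colon \n \to k$ that admits no infinite homogeneous set. Since $f$ is a set, each color class $C_j = \{\, n \mid f(n) = j\,\}$ is $\Delta^0_1$, and over $\rca_0$ the absence of an infinite homogeneous set is equivalent to every $C_j$ being finite, i.e.\ bounded. The goal is then to attach to $f$ a normal \wpo-dilator $W_f$ whose Kruskal fixed point $X_f$ fails to be a well partial order precisely because the bounds of the classes $C_j$ cannot be collected into a single bound — thereby contradicting the uniform Kruskal theorem and forcing an infinite homogeneous set to exist after all. (Recall that over $\rca_0$ the principle being sought is equivalent to $B\Sigma^0_2$, so its content is exactly this amalgamation of finitely-but-unboundedly-many bounds.)

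First I would construct, from $f$, a coded normal \po-dilator $W_f$ whose generators carry the color data of $f$. The delicate point is that $W_f$ must be certified to be a \wpo-dilator in $\rca_0$ \emph{alone}: I cannot realize $W_f$ as a $k$-fold sum $X + \dots + X$, because ``a finite sum of well partial orders is a well partial order'' already requires the infinite pigeonhole principle for $k$ colors, which is what we are proving and would render the argument circular. Instead I would arrange $W_f$ so that any bad sequence in $W_f(Y)$ can be pushed down to a bad sequence in $Y$ itself, using normality together with the support condition of Definition~\ref{def:po_dilator}: only finitely many generators lie outside the image of $Y$, so discarding them (via the two-color pigeonhole principle, which $\rca_0$ does prove) leaves an infinite bad subsequence inside $Y$. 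This reduction is uniform in $k$ and avoids any variable-color pigeonholing, so $\rca_0$ proves $W_f$ preserves well partial orders.

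With $W_f$ in hand, the uniform Kruskal theorem supplies a Kruskal fixed point $X_f$ that is a well partial order. I would then use the standing assumption — each $C_j$ is finite, yet the family of their bounds admits no common bound — to manufacture an explicit infinite bad sequence in $X_f$. Walking up the recursive structure of the fixed point, one reads off from $f$ a sequence of generators whose supports $\supp_{X_f}(\cdot)$ encode ever-larger class bounds, arranged through the $\leq_\fin$ clause of the Kruskal order so that no later term lies $\leq_\fin$ the support of an earlier one and no order relation between distinct terms holds. Since the bounds of the $C_j$ cannot be amalgamated, this selection never halts and produces a genuinely infinite bad sequence, contradicting that $X_f$ is a well partial order. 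The contradiction refutes the assumption, so $f$ has an infinite homogeneous set; as $f$ and $k$ were arbitrary, the infinite pigeonhole principle follows.

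The main obstacle is the tension inherent in the construction of $W_f$: it must be rich enough that well partial orderedness of its fixed point encodes the simultaneous boundedness of all color classes (hence genuinely carries the strength we are after), yet tame enough that $\rca_0$ by itself — with neither variable-color pigeonholing nor the ascending descending sequence or chain antichain principles — certifies it as a normal \wpo-dilator. Threading this needle, namely encoding the coloring in the order relation while keeping the preservation proof reducible to the two-color case, is where the real work lies. Once the reduction of bad sequences from $W_f(Y)$ back to $Y$ is set up correctly, both the application of the uniform Kruskal theorem and the extraction of the contradiction are comparatively routine.
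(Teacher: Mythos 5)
Your proposal correctly identifies the central tension --- the dilator must be certified as a \wpo-dilator in $\rca_0$ alone (so its preservation proof may only use two-color pigeonholing), while its Kruskal fixed point must nevertheless encode the full strength of variable-color pigeonholing --- but it never resolves that tension: the dilator $W_f$ is never defined, the reduction of bad sequences in $W_f(Y)$ to bad sequences in $Y$ is only asserted, and the ``explicit infinite bad sequence in $X_f$'' is described in terms (``generators whose supports encode ever-larger class bounds'') that do not correspond to any construction. Since the entire argument hinges on producing such a $W_f$, this is a genuine gap rather than a routine omission. There is also a structural worry with building the dilator directly from the coloring $f$: the coded dilator must be determined by its action on finite partial orders, uniformly, and it is unclear how the global data ``the bounds of the $C_j$ admit no common bound'' could be read off from the order relation on a single fixed point without first packaging it into an order-theoretic statement.

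The paper threads the needle differently, and the difference is instructive. It first proves (Lemma~\ref{lem:ipp_alpha_times_n}) that the infinite pigeonhole principle is equivalent over $\rca_0$ to ``$\alpha\times n$ is a well partial order for every well order $\alpha$ and every $n$''; the coloring $f$ is absorbed into the construction of a suitable linear order $\alpha$ on $\n$ (ordering by reverse color, then reverse numerical order within a color), so that a bad sequence in $\alpha\times n$ exists exactly when $f$ has no infinite homogeneous set. The dilator is then the very simple $V_\alpha(X)=\alpha+X$, which $\rca_0$ certifies as a normal \wpo-dilator using only two-color pigeonholing, and the key Lemma~\ref{lem:ukt_alpha_times_n} shows that $\alpha\times n$ quasi-embeds into any Kruskal fixed point of $V_\alpha$ via the recursion $f((\beta,0))=\kappa((0,\beta))$, $f((\beta,i))=\kappa((1,f((\beta,i-1))))$. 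The well-partial-orderedness of the fixed point then pulls back along the quasi embedding. If you want to rescue your plan, this factorization --- isolate the order-theoretic equivalent of pigeonholing first, then embed that order into the fixed point of a dilator that is trivially a \wpo-dilator --- is the missing idea.
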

Recall, the infinite pigeonhole principle states that for any number~${n \in \n}$ and function~${f: \n \to n}$, there is an infinite (so-called \emph{homogeneous}) set~${X \subseteq \n}$ such that~${f}$ restricted to~${X}$ is constant. In other words: There is a color~${c < n}$ such that~${f^{-1}(c)}$ is infinite.
The heart of the proof lies in the fact that the infinite pigeonhole principle can be characterized using the following well (partial) ordering principle:
\begin{lemma}\label{lem:ipp_alpha_times_n}
	The system~${\rca_0}$ proves that the infinite pigeonhole principle is equivalent to the statement that~${\alpha \times n}$ is a well partial order for each well order~${\alpha}$ and number~${n \in \n}$.
\end{lemma}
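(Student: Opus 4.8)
The plan is to prove the two implications separately, reading the finite order $n$ as the $n$-element antichain, so that $\alpha \times n$ is just $n$ disjoint copies of $\alpha$ and $(a,c) \leq (a',c')$ holds exactly when $c = c'$ and $a \leq a'$; this is the reading that ties the product to a pigeonhole phenomenon rather than to a principle about products of two chains.

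For the direction from the infinite pigeonhole principle to the well-partial-ordering statement, I would first record the elementary fact, provable in $\rca_0$, that every well order is a well partial order: a bad sequence in a linear order is, by linearity, strictly descending, contradicting well-foundedness. Now let $\alpha$ be a well order, let $n \in \n$, and let $(a_i, c_i)_{i \in \n}$ be an arbitrary sequence in $\alpha \times n$. Applying the infinite pigeonhole principle to the coloring $i \mapsto c_i$ yields an infinite set $H$ on which the second coordinate is some constant $c$. Enumerating $H$ increasingly and applying the previous fact to the sequence of first coordinates along $H$ produces indices $i < j$ in $H$ with $a_i \leq a_j$; since $c_i = c_j = c$, this is a good pair for the original sequence, so $\alpha \times n$ is a well partial order.

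For the converse, fix $f : \n \to n$ and suppose toward a contradiction that no color is infinite. The idea is to manufacture a well order from $f$ for which $(i, f(i))_{i \in \n}$ is a bad sequence. I would define a relation $\prec$ on $\n$ by declaring $i \prec j$ if and only if $f(i) < f(j)$, or $f(i) = f(j)$ and $i > j$; concretely, this orders the natural numbers first by color and then, within a color, by decreasing index. This relation is a $\Delta^0_1(f)$ linear order and is thus available in $\rca_0$. Its well-foundedness holds exactly because $f$ has no infinite color: along any $\prec$-descending sequence the colors are non-increasing, hence eventually equal to some fixed $c$, after which the indices strictly increase and so exhibit $f^{-1}(c)$ as infinite. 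Letting $\alpha := (\n, \prec)$, the assumed well-partial-orderedness of $\alpha \times n$ contradicts the badness of $(i, f(i))_{i \in \n}$: whenever $i < j$ with $f(i) = f(j)$ we have $j \prec i$, so the first coordinates are not $\preceq$-comparable in the required direction, while distinct colors are incomparable in the antichain. This contradiction produces an infinite color, which is the infinite pigeonhole principle.

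The main obstacle is this converse direction, and specifically the definability of the auxiliary well order. A tempting construction assigns to each index $i$ the number of later occurrences of its color, which strictly decreases within each finite color class; but this function is not $\Delta^0_1(f)$, since computing it would require locating the last occurrence of a color, i.e.\ a bound that $\rca_0$ cannot in general produce. This is precisely the collection-theoretic content that makes the infinite pigeonhole principle nontrivial over $\rca_0$. The lexicographic order on the pairs $(f(i), i)$ above circumvents the difficulty by using only local comparisons of $f$-values, trading the transparent ``decreasing count'' for a well-foundedness argument that is driven directly by the absence of an infinite color.
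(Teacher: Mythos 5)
Your overall route is the same as the paper's: the forward direction stabilizes the second coordinate with the pigeonhole principle and reads off a bad sequence in~${\alpha}$, and the converse builds an auxiliary well order on~${\n}$ by comparing colors first and then reversing the index within a color, so that~${(i, f(i))_{i \in \n}}$ becomes bad. Your closing remark about why the ``count the later occurrences'' order is not~${\Delta^0_1}$-definable is correct and not needed for the argument.

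There is, however, one genuine issue: your decision to read~${n}$ as the~${n}$-element \emph{antichain}. In this paper~${n}$ carries its standard linear order --- this is visible both in Section~5, where the finite antichain of size~${n}$ is introduced as a separate object~${N}$ and explicitly distinguished from~${n}$, and in the proof of Lemma~\ref{lem:ukt_alpha_times_n}, which uses~${(\gamma, j-1) \leq_{\alpha \times n} (\gamma, j)}$ and hence needs~${j - 1 \leq_n j}$. The lemma is then applied in Proposition~\ref{prop:ukt_ipp} to the chain version, so the reverse implication must be proved for that version, and there your construction breaks: with your order~${i \prec j \iff f(i) < f(j)}$ or ($f(i) = f(j)$ and $i > j$), a pair~${i < j}$ with~${f(i) < f(j)}$ satisfies both~${i \prec j}$ and~${f(i) \leq_n f(j)}$, so~${(i, f(i)) \leq (j, f(j))}$ is a \emph{good} pair and your sequence is not bad. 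The fix is exactly the paper's definition: order by \emph{decreasing} color first ($i <_\alpha j$ iff~${f(i) > f(j)}$, or~${f(i) = f(j)}$ and~${i >_\n j}$). Then for~${i <_\n j}$ the case~${f(i) < f(j)}$ gives~${i >_\alpha j}$, the case~${f(i) > f(j)}$ fails in the second coordinate even for the chain, and the case~${f(i) = f(j)}$ gives~${i >_\alpha j}$; the well-foundedness argument goes through verbatim with ``non-increasing'' replaced by ``weakly ascending''. Your forward direction is unaffected (it proves the antichain version, which implies the chain version), so this is a one-line repair, but as written the converse does not establish the statement the paper needs.
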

\begin{proof}
	First, assume that the infinite pigeonhole principle holds. For contradiction, let~${((\beta_i, k_i))_{i \in \n} \subseteq (\alpha \times n)}$ be a sequence that we assume to be bad in~${\alpha \times n}$. By the infinite pigeonhole principle, there is a number~${k \in \n}$ such that~${k = k_i}$ holds for infinitely many indices~${i \in \n}$. Using~${\Delta^0_1}$-comprehension, we can move to a subsequence and assume, w.l.o.g., that~${k = k_i}$ holds for \emph{all} indices~${i \in \n}$. Then,~${(\beta_i, k) \nleq (\beta_j, k)}$ implies~${\beta_i \nleq \beta_j}$ for any two indices~${i < j}$. Thus,~${(\beta_i)_{i \in \n}}$ is a bad sequence in~${\alpha}$, which is a contradiction.
	
	For the other direction, let~${n \in \n}$ be a number and~${f: \n \to n}$ an arbitrary function such that~${f}$ maps each natural number to a color in~${n}$. Assume, for contradiction, that there does not exist an infinite homogeneous set for~${f}$ while~${\alpha \times n}$ is a well partial order for each well order~${\alpha}$.
	We define~${\alpha}$ to be the following order on~${\n}$: We have~${i <_\alpha j}$ if and only if~${f(i) > f(j)}$ holds or we have both~${f(i) = f(j)}$ and~${i >_\n j}$. It is easy to see that~${\alpha}$ is a linear order. Assume that~${\alpha}$ contains an infinitely descending sequence~${(\beta_i)_{i \in \n} \subseteq \alpha}$. By definition of our order, we know that~${(f(\beta_i))_{i \in \n} \subseteq n}$ must be weakly ascending. Since there are no infinitely ascending sequences in~${n}$, there must be an index~${i \in \n}$ such that~${(f(\beta_j))_{j \geq i}}$ is constant. By definition of our order, the members of this sequence are strictly increasing in~${\n}$. This defines an infinite subset~${X \subseteq \n}$ such that~${f}$ is constant on~${X}$. Hence,~${X}$ is an infinite homogeneous set, which leads to a contradiction. Now, by assumption, we know that~${\alpha \times n}$ must be a well partial order. However, we claim that the sequence~${(i, f(i))_{i \in \n} \subseteq \alpha \times n}$ is bad in~${\alpha \times n}$: Let~${i, j \in \n}$ be arbitrary indices with~${i <_\n j}$. If~${f(i) < f(j)}$ holds, we have~${i >_\alpha j}$, which leads to~${(i, f(i)) \nleq (j, f(j))}$. We arrive at the same conclusion even more directly if~${f(i) > f(j)}$ holds. Now, in the last case with~${f(i) = f(j)}$, our assumption~${i <_\n j}$ leads to~${i >_\alpha j}$. Again, we have~${(i, f(i)) \nleq (j, f(j))}$. We conclude that~${(i, f(i))_{i \in \n}}$ is bad in~${\alpha \times n}$. From this contradiction, we derive that~${f}$ must have an infinite homogeneous set.
\end{proof}
Interestingly enough, it is easy to see that this equivalence does not hold anymore if we only consider well orders~${\alpha}$ for which each infinite suborder contains a strictly ascending sequence. In fact, for this class of well orders, our claim already holds in~${\rca_0}$ itself. This can be used to derive the infinite pigeonhole principle from~${\ads}$.

Our next steps are dedicated to deriving that~${\alpha \times n}$ is a well partial order for arbitrary well orders~${\alpha}$ and numbers~${n \in \n}$ if the uniform Kruskal theorem is available. For this, of course, we require a \wpo-dilator.
\begin{definition}
	For each well order~${\alpha}$, we define a functor~${V_\alpha}$ with
	\begin{equation*}
		V_\alpha(X) := \alpha + X
	\end{equation*}
	for each partial order~${X}$. Given a quasi embedding~${f: X \to Y}$, we set
	\begin{equation*}
		V_\alpha(f)(\sigma) :=
		\begin{cases}
			(0, \beta) & \text{if~${\sigma = (0, \beta)}$,}\\
			(1, f(x)) & \text{if~${\sigma = (1, x)}$,}
		\end{cases}
	\end{equation*}
	for each~${\sigma \in V_\alpha(X)}$. Finally, for each partial order~${X}$ and element~${\sigma \in V_\alpha(X)}$, the support of~${\sigma}$ is given by
	\begin{equation*}
		\supp_X(\sigma) :=
		\begin{cases}
			\emptyset & \text{if~${\sigma = (0, \beta)}$,}\\
			\{x\} & \text{if~${\sigma = (1, x)}$.}
		\end{cases}
	\end{equation*}
\end{definition}
\begin{lemma}\label{lem:W_alpha_normal_wpo_dilator}
	The system~${\rca_0}$ proves that~${V_\alpha}$ is a normal \wpo-dilator for each well order~${\alpha}$.
\end{lemma}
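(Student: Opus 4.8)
The plan is to verify, in turn, each clause in the definition of a normal \wpo-dilator (Definitions~\ref{def:po_dilator} and~\ref{def:normal}), with the genuine mathematical content concentrated entirely in the final preservation of well partial orders. First I would confirm that $V_\alpha$ is a functor $\po \to \po$. That $V_\alpha(X) = \alpha + X$ is a partial order whenever $X$ is follows from the earlier remark that $\rca_0$ proves sums of partial orders to be partial orders. To see that $V_\alpha(f)$ is again a quasi embedding when $f$ is, I would argue by the four-way case distinction on whether the two arguments lie in the $\alpha$-summand or the $X$-summand: the mixed cases are vacuous since elements of distinct summands are incomparable, the $\alpha$-$\alpha$ case is immediate because $V_\alpha(f)$ is the identity there, and the $X$-$X$ case is exactly the hypothesis that $f$ reflects the order. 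Functoriality (preservation of identities and composition) is then read off directly from the same case split.

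Next I would check that $\supp$ is a natural transformation $V_\alpha \Rightarrow [\cdot]^{<\omega}$ and that $V_\alpha$ satisfies the two conditions of a \po-dilator. Naturality amounts to the identity $\supp_Y(V_\alpha(f)(\sigma)) = f(\supp_X(\sigma))$, which I would verify on the two kinds of $\sigma$: both sides equal $\emptyset$ when $\sigma = (0,\beta)$, and both sides equal $\{f(x)\}$ when $\sigma = (1,x)$. For condition~i), preservation of embeddings, the only new thing beyond the quasi-embedding argument above is order \emph{preservation}, which reduces to $f$ preserving the order in the $X$-$X$ case. For the support condition~ii), I would simply compute both sides for an embedding $f$: the range of $V_\alpha(f)$ consists of all $(0,\beta)$ together with those $(1,y)$ having $y \in \rng(f)$, and this is exactly the set of $\sigma \in V_\alpha(Y)$ with $\supp_Y(\sigma) \subseteq \rng(f)$, since the support of an $\alpha$-element is empty while the support of $(1,y)$ is $\{y\}$.

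Normality (Definition~\ref{def:normal}) is another short case check: if $\sigma \leq \tau$ in $V_\alpha(X)$ then $\sigma$ and $\tau$ share a summand; in the $\alpha$-case both supports are empty, so $\emptyset \leq_\fin \emptyset$ holds vacuously, and in the $X$-case $\sigma = (1,x) \leq (1,x') = \tau$ forces $x \leq x'$, whence $\{x\} \leq_\fin \{x'\}$.

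The substantive step is showing that $V_\alpha$ preserves well partial orders. Given a well partial order $X$, I would first note that the well order $\alpha$ is itself a well partial order in $\rca_0$: a bad sequence in a linear order is strictly descending, contradicting well-foundedness. Then $V_\alpha(X) = \alpha + X$ is a sum of two well partial orders, and I would invoke the earlier observation that $\rca_0$ proves the sum of two well partial orders to be a well partial order (an easy consequence of the infinite pigeonhole principle for two colors, which is available in $\rca_0$). This completes the verification. I do not expect any serious obstacle here: everything except the final step is mechanical bookkeeping over the summand case distinction, and the final step is exactly the cited sum-preservation fact; the only point deserving a moment's care is recording that, over $\rca_0$, a well order counts as a well partial order.
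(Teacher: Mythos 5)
Your proposal is correct and follows essentially the same route as the paper: routine case checks on the two summands for functoriality, naturality, the support condition, and normality, with the \wpo{} preservation reduced to the infinite pigeonhole principle for two colors (the paper inlines this as ``any bad sequence in $\alpha + X$ yields an infinite descending sequence in $\alpha$ or a bad sequence in $X$,'' which is the same argument you invoke via the sum-preservation fact from the preliminaries). Your explicit remark that a well order is a well partial order over $\rca_0$ is a worthwhile detail the paper leaves implicit.
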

\begin{proof}
	It is easy to see that~${V_\alpha}$ satisfies the functor axioms and preserves embeddings. Also, it is not hard to show that~${\supp}$ is a natural transformation. Concerning the support condition: Let~${f: X \to Y}$ be an embedding and let~${\sigma \in V_\alpha(Y)}$ be arbitrary. If it is of the form~${\sigma = (0, \beta)}$ for some~${\beta \in \alpha}$, then we already have~${\sigma \in V_\alpha(X)}$ and conclude~${V_\alpha(f)(\sigma) = \sigma}$. Otherwise, if~${\sigma = (1, y)}$ holds for some~${y \in Y}$ with~${\{y\} = \supp_Y(\sigma) \subseteq \rng(f)}$, then there exists~${x \in X}$ with~${f(x) = y}$. Consequently, we have~${V_\alpha(f)((1, x)) = (1, f(x)) = (1, y)}$. We conclude that~${V_\alpha}$ is a \po-dilator.
	
	In order to show that~${V_\alpha}$ even is a \wpo-dilator, we consider~${V_\alpha(X)}$ for a well partial order~${X}$. By the infinite pigeonhole principle for two colors, which is available in~${\rca_0}$, any bad sequence in~${V_\alpha(X)}$ leads either to an infinite descending sequence in~${\alpha}$ or a bad sequence in~${X}$. Each, of course, contradicts one of our assumptions. We conclude that~${V_\alpha}$ is a \wpo-dilator.
	
	Finally, we show that~${V_\alpha}$ is normal. Let~${X}$ be an arbitrary partial order and let~${\sigma, \tau \in V_\alpha(X)}$ be elements satisfying~${\sigma \leq \tau}$. Then, we know that either both~${\sigma}$ and~${\tau}$ live in the left summand of~${\alpha + X}$ (i.e.~their first component equals~${0}$), or both of them live in its right summand (i.e.~their first component equals~${1}$). In the first case, their supports are empty and we are immediately done. Otherwise, there exist~${x, y \in X}$ with~${\sigma = (1, x)}$ and~${\tau = (1, y)}$, which entails~${x \leq y}$ by our assumption~${\sigma \leq \tau}$. We conclude
	\begin{equation*}
		\supp_X(\sigma) = \{x\} \leq_\fin \{y\} = \supp_X(\tau)\period
	\end{equation*}
	Hence,~${V_\alpha}$ is a normal \wpo-dilator.
\end{proof}
\begin{lemma}\label{lem:ukt_alpha_times_n}
	The system~${\rca_0}$ proves that~${\alpha \times n}$ quasi-embeds into any uniform Kruskal fixed point of~${V_\alpha}$, for each well order~${\alpha}$ and number~${n \in \n}$.
\end{lemma}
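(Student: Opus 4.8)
The plan is to write down an explicit quasi embedding. Let $X$ be a (uniform) Kruskal fixed point of $V_\alpha$ with its bijection $\kappa\colon V_\alpha(X)\to X$; since the argument only uses the defining order equivalence of a Kruskal fixed point, whatever ``uniform'' adds is irrelevant. As $V_\alpha(X)=\alpha+X$, every argument of $\kappa$ is either $(0,\beta)$ with $\beta\in\alpha$ or $(1,x)$ with $x\in X$, and I abbreviate the two resulting families of elements of $X$ by
\[
 c_\beta := \kappa((0,\beta)) \qquad\text{and}\qquad s(x) := \kappa((1,x)).
\]
The guiding picture is that the fixed point contains, for each $\beta\in\alpha$, an ascending chain $c_\beta, s(c_\beta), s^2(c_\beta),\dots$, and that these chains are ordered across different $\beta$ exactly according to $\alpha$. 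Accordingly I would define $g\colon\alpha\times n\to X$ by $g(\beta,i):=s^i(c_\beta)$; this is available in $\rca_0$ because it is given by primitive recursion in $i$ from the parameter $\kappa$. The goal is then to show that $g$ reflects the order.

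First I would unfold the defining biconditional of a Kruskal fixed point for $V_\alpha$, recalling $\supp_X((0,\beta))=\emptyset$ and $\supp_X((1,x))=\{x\}$. Distinguishing the four combinations of arguments yields
\[
 c_\beta \leq c_\gamma \iff \beta \leq \gamma, \qquad c_\beta \leq s(x) \iff c_\beta \leq x,
\]
\[
 s(x) \not\leq c_\gamma, \qquad s(x) \leq s(y) \iff x \leq y \ \text{or}\ s(x) \leq y.
\]
(In each case the clauses on the right come from $\sigma\leq\tau$ and from $\kappa(\sigma)\leq_\fin\supp_X(\tau)$; note that $\leq_\fin$ against $\emptyset$ is never satisfied, and that $(0,\beta)$ and $(1,x)$ lie in different summands of $\alpha+X$.) A short induction on $j$ using the first two identities then gives the auxiliary fact $c_\beta\leq s^j(c_\gamma)\implies\beta\leq\gamma$.

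The heart of the argument is the claim
\[
 s^i(c_\beta) \leq s^j(c_\gamma) \implies i \leq j \ \text{and}\ \beta \leq \gamma,
\]
which I would prove by induction on $i+j$. The case $i=0$ is the auxiliary fact, and the case $i\geq 1$, $j=0$ is vacuous since $s(\cdot)\not\leq c_\gamma$. For $i,j\geq 1$ the fourth identity turns the hypothesis into the disjunction $s^{i-1}(c_\beta)\leq s^{j-1}(c_\gamma)$ or $s^i(c_\beta)\leq s^{j-1}(c_\gamma)$; in the first disjunct the induction hypothesis (at measure $i+j-2$) gives $i-1\leq j-1$ and $\beta\leq\gamma$, and in the second (at measure $i+j-1$) it gives $i\leq j-1$ and $\beta\leq\gamma$. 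Either way $i\leq j$ and $\beta\leq\gamma$. Since $(\beta,i)\leq(\gamma,j)$ in $\alpha\times n$ means precisely $\beta\leq\gamma$ and $i\leq j$, this shows that $g$ is a quasi embedding, as desired.

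The main obstacle is exactly this last induction: one must see that the ``free'' clause $\kappa(\sigma)\leq_\fin\supp_X(\tau)$ of the Kruskal order can only ever be used to decrease the right-hand exponent (the second disjunct), never the left, so that comparability forces $i\leq j$. I would also note that the formula being inducted over is $\Pi^0_1$ (its matrix is $\Delta^0_1$ in $\kappa$, $X$ and $\alpha$, with the quantifiers over $i,j,\beta,\gamma$ in front), so the induction is legitimate in $\rca_0$; and that $g$ is in fact an embedding, since monotonicity of $s$ together with the ascending chain $c_\gamma\leq s(c_\gamma)\leq\cdots$ yields the converse implication, although only the reflecting direction is needed here.
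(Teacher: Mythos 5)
Your proof is correct and follows essentially the same route as the paper: the same map $(\beta,i)\mapsto s^i(c_\beta)$ (the paper's $f((\beta,i))$, defined by recursion on $i$) and the same induction on $i+j$, with the same case analysis driven by unfolding the Kruskal order on $V_\alpha(X)=\alpha+X$ and observing that the clause $\kappa(\sigma)\leq_\fin\supp_X(\tau)$ can only strip an application of $s$ from the right-hand side. The only cosmetic difference is that you state the four unfolded order identities up front and factor the $i=0$ case into a separate auxiliary induction on $j$, whereas the paper handles that case inside the single induction.
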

\begin{proof}
	Let~${X}$ be a uniform Kruskal fixed point of~${V_\alpha}$ with collapse~${\kappa: V_\alpha(X) \to X}$. We define a quasi embedding~${f: \alpha \times n \to X}$ recursively by setting
	\begin{equation*}
		f((\beta, i)) :=
		\begin{cases}
			\kappa((0, \beta)) & \text{if~${i = 0}$,}\\
			\kappa((1, f((\beta, i - 1)))) & \text{if~${i > 0}$,}
		\end{cases}
	\end{equation*}
	for each~${(\beta, i) \in \alpha \times n}$. By induction along~${i + j}$, we prove that~${f((\beta, i)) \leq_X f((\gamma, j))}$ implies~${(\beta, i) \leq_{\alpha \times n} (\gamma, j)}$ for all pairs~${(\beta, i), (\gamma, j) \in \alpha \times n}$.
	If~${i + j}$ is equal to zero, then we have~${\kappa((0, \beta)) \leq_X \kappa((0, \gamma))}$. Since the support of~${(0, \gamma)}$ is empty in~${V_\alpha(X)}$, this implies~${(0, \beta) \leq_{\alpha + X} (0, \gamma)}$ and, therefore,~${\beta \leq_\alpha \gamma}$. Finally, we arrive at our claim~${(\beta, 0) \leq_{\alpha \times n} (\gamma, 0)}$.
	
	If~${i}$ is positive but~${j}$ equals zero, then we arrive at a contradiction: The inequality~${f((\beta, i)) \leq_X f((\gamma, 0))}$ is defined as~${\kappa((1, f((\beta, i - 1)))) \leq_X \kappa((0, \gamma))}$. Since the support of~${(0, \gamma)}$ is empty in~${V_\alpha(X)}$, this implies~${(1, f((\beta, i - 1))) \leq_{\alpha + X} (0, \gamma)}$, which is impossible as both elements are incomparable.
	
	If~${i}$ equals zero but~${j}$ is positive, then~${f((\beta, 0)) \leq_X f((\gamma, j))}$ leads to the inequality~${\kappa((0, \beta)) \leq_X \kappa((1, f((\gamma, j - 1))))}$. As both~${(0, \beta)}$ and~${(1, f((\gamma, j - 1)))}$ are incomparable, we conclude that~${\kappa((0, \beta))}$ must be less than or equal to the only element in the support of~${(1, f((\gamma, j - 1)))}$, which is given by~${f((\gamma, j - 1))}$. Applying the induction hypothesis to~${f((\beta, 0)) = \kappa((0, \beta)) \leq_X f((\gamma, j - 1))}$, we arrive at~${(\beta, 0) \leq_{\alpha \times n} (\gamma, j - 1)}$, which clearly leads to our claim~${(\beta, 0) \leq_{\alpha \times n} (\gamma, j)}$.
	
	Finally, if both~${i}$ and~${j}$ are positive, then~${f((\beta, i)) \leq_X f((\gamma, j))}$ implies the inequality~${\kappa((1, f((\beta, i - 1)))) \leq_X \kappa((1, f((\gamma, j - 1))))}$. Now, if this holds because of~${(1, f((\beta, i - 1))) \leq_X (1, f((\gamma, j - 1)))}$, we derive~${f((\beta, i - 1)) \leq_X f((\gamma, j - 1))}$. By induction hypothesis, we have~${(\beta, i - 1) \leq_{\alpha \times n} (\gamma, j - 1)}$. Clearly, this leads to our claim. Otherwise, if the inequality holds since the object on the left hand side is less than or equal to the only support element of the object on the right hand side, we have~${f((\beta, i)) = \kappa((1, f((\beta, i - 1)))) \leq_X f((\gamma, j - 1))}$. By induction hypothesis, this entails~${(\beta, i) \leq_{\alpha \times n} (\gamma, j - 1)}$. Clearly, this leads to our claim~${(\beta, i) \leq_{\alpha \times n} (\gamma, j)}$.
\end{proof}

\begin{proof}[Proof of Proposition~\ref{prop:ukt_ipp}]
	Assume that the uniform Kruskal theorem holds. In order to produce the infinite pigeonhole principle using Lemma~\ref{lem:ipp_alpha_times_n}, we only have to show that~${\alpha \times n}$ is a well partial order for any well order~${\alpha}$ and number~${n \in \n}$. By the uniform Kruskal theorem, there exists a well partial order~${X}$ that together with a collapse~${\kappa: V_\alpha(X) \to X}$ forms a Kruskal fixed point of~${V_\alpha}$. For this, we used Lemma~\ref{lem:W_alpha_normal_wpo_dilator}, according to which~${V_\alpha}$ is a normal \wpo-dilator. Finally, by Lemma~\ref{lem:ukt_alpha_times_n}, there exists a quasi embedding from~${\alpha \times n}$ into~${X}$. Since quasi embeddings reflect the property of being a well partial order, the requirements for Lemma~\ref{lem:ipp_alpha_times_n} are satisfied and we conclude that the infinite pigeonhole principle must hold.
\end{proof}

\section{Exponentiation for a restricted class of well partial orders}\label{sec:exp}

In this section, we derive Theorem~\ref{thm:exp_perfect}, an improvement of Girard's characterization (cf.~\cite[Section~II.5]{GirardExp}) of arithmetical comprehension using exponentiation that will be crucial for our proof of Theorem~\ref{thm:UKT_RCA}. We closely follow Hirst's construction (cf.~\cite[Theorem~2.6]{HirstExp}).

\begin{definition}[{ess.~\cite[Definition~2.1]{HirstExp}}]
	For each well order~${\alpha}$, we define an order~${2^\alpha}$ that consists of terms
	\begin{equation*}
		\sigma := 2^{\beta_0} + \dots + 2^{\beta_{n-1}}
	\end{equation*}
	for each~${n \in \n}$ and elements~${\beta_0 > \dots > \beta_{n-1}}$ of~${\alpha}$.
	Given two such terms~${\sigma}$ and
	\begin{equation*}
		\tau := 2^{\gamma_0} + \dots + 2^{\gamma_{m-1}}
	\end{equation*}
	in~${2^\alpha}$, we have~${\sigma < \tau}$ if and only if
	\begin{enumerate}[label=\roman*)]
		\item~${\tau}$ is a proper extension of~${\sigma}$, i.e.~we have~${n < m}$ as well as~${\beta_i = \gamma_i}$ for all~${i < n}$, or
		\item there is a common index~${i < \min(n, m)}$ satisfying both~${\beta_i < \gamma_i}$ and~${\beta_j = \gamma_j}$ for all~${j < i}$.
	\end{enumerate}
\end{definition}
It can easily be seen in~${\rca_0}$ that~${2^\alpha}$ is a linear order for each well order~${\alpha}$.\footnote{In fact, this property also holds if the underlying linear order is ill-founded. An interesting such instance is given by~${2^{-\omega}}$, where~${-\omega}$ describes the non-positive whole numbers: The resulting order is isomorphic to the rationals (see, e.g.,~\cite[Theorem~3.1]{FreundManca}).}
We closely follow Hirst's construction in \cite[Theorem~2.6]{HirstExp}:
\begin{definition}[{ess.~\cite[p.~4]{HirstExp}}]
	For any injective function~${f: \nplus \to \n}$, we define a tree~${T_f \subseteq \n^*}$ that contains a sequence~${s \in \n^*}$ if and only if \emph{both} of the following are satisfied:
	\begin{enumerate}[label=\roman*)]
		\item The equality~${f(s_i) = i}$ holds for all indies~${i < |s|}$ with~${s_i > 0}$.
		\item The inequality~${s_i > 0}$ holds if there is some index~${j < |s|}$ with~${f(j) = i}$, for all indices~${i < |s|}$.
	\end{enumerate}
	This tree~${T_f}$ is ordered using the \emph{Kleene-Brouwer order} (cf.~\cite[Definition~V.1.2]{SimpsonBook}), i.e., for any two sequences~${s, t \in T_f}$, we have~${s < t}$ if and only if one of the following two is satisfied:
	\begin{enumerate}[label=\roman*), start=3]
		\item The sequence~${s}$ is a proper extension of~${t}$, i.e., we have both~${|s| > |t|}$ and~${s_i = t_i}$ for all~${i < |t|}$.
		\item There is a common index~${i < \min(|s|, |t|)}$ with both~${s_i < t_i}$ and~${s_j = t_j}$ for all smaller indices~${j < i}$.
	\end{enumerate}
\end{definition}
Notice that in contrast to our order on~${2^\alpha}$ for well orders~${\alpha}$, longer sequences represent smaller elements in the Kleene-Brouwer order. It is not hard to find a function~${f}$ so that~${T_f}$ is ill-founded. Still, we have the following:
\begin{lemma}\label{lem:T_f_linear_order}
	The system~${\rca_0}$ proves that for any injective~${f: \nplus \to \n}$, the tree~${T_f}$ (associated with the Kleene-Brouwer order) is a linear order.
\end{lemma}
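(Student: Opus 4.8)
The plan is to observe that the relation defined by clauses (iii) and (iv) is nothing but the restriction to $T_f$ of the Kleene--Brouwer order on the full set $\n^*$, and that being a strict linear order is inherited by suborders. Hence it suffices to check that clauses (iii) and (iv) define a strict linear order on all of $\n^*$; the specific membership conditions (i) and (ii) cutting out $T_f$ play no role whatsoever. Since any two coded finite sequences can be compared by a bounded search (to determine their lengths and their least point of disagreement), the resulting relation is $\Delta^0_0$-definable, so all three order axioms can be verified by bounded reasoning that is unproblematic in~${\rca_0}$; in particular, no induction beyond what is available and no well-foundedness of~${T_f}$ is needed.

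For \emph{irreflexivity}, I note that clause (iii) applied to~${s < s}$ would force~${|s| > |s|}$ while clause (iv) would force~${s_i < s_i}$, both impossible. For \emph{totality} I would, given~${s, t \in \n^*}$, set~${m := \min(|s|, |t|)}$ and perform a bounded search for the least index~${i < m}$ with~${s_i \neq t_i}$. If no such~${i}$ exists, then the shorter of the two sequences is an initial segment of the longer, so either~${s = t}$ or exactly one of~${s, t}$ is a proper extension of the other and clause (iii) applies; if such an~${i}$ does exist, then~${s_j = t_j}$ for all~${j < i}$, and comparing~${s_i}$ with~${t_i}$ yields exactly one instance of clause (iv). These cases are mutually exclusive, which gives trichotomy.

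The remaining and most tedious point is \emph{transitivity}: assuming~${s < t}$ and~${t < u}$, I would split into the four combinations according to whether each inequality is witnessed by clause (iii) or clause (iv). When both are witnessed by (iii), transitivity of the proper-extension relation gives~${s < u}$ directly. In the two mixed cases and in the case where both are witnessed by (iv), the argument is pure bookkeeping: from the two hypotheses one records the indices up to which~${s}$,~${t}$, and~${u}$ agree, together with the relevant strict inequality, and then reads off that either~${s}$ properly extends~${u}$ (giving clause (iii)) or~${s}$ and~${u}$ first disagree at an index where~${s}$ is smaller (giving clause (iv)). I expect the (iv)--(iv) case to require the most care, since it splits into subcases according to whether the two first-disagreement indices coincide, precede, or follow one another; in each subcase, however, the conclusion follows immediately from the recorded agreements. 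Having verified irreflexivity, totality, and transitivity, I conclude that clauses (iii) and (iv) define a strict linear order on~${\n^*}$, whose restriction to~${T_f}$ is therefore a linear order, as claimed.
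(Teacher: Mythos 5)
Your proof is correct, and it takes the only natural route: the paper in fact gives no proof of this lemma at all, treating it as the standard fact that the Kleene--Brouwer order on (any subtree of) $\n^*$ is a linear order, verifiable in $\rca_0$ since the relation is $\Delta^0_0$ and the order axioms follow by the bounded case analysis you describe. Your observation that conditions i) and ii) and the injectivity of $f$ play no role here is also accurate.
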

Before we continue with the main arguments, we introduce a weaker notion of extension for sequences:
\begin{definition}
	Given two sequences~${s, t \in \n^*}$, we call~${t}$ a \emph{weak extension} of~${s}$ if and only if we have~${|s| \leq |t|}$ and~${s_i = t_i}$ for all~${i < |s|}$ with~${s_i > 0}$.
\end{definition}
For general sequences, this notion of a weak extension is almost meaningless. However, in the context of~${T_f}$ for injective~${f}$, it plays an important role as the next result reveals:
\begin{lemma}\label{lem:pos_equal}
	The system~${\rca_0}$ proves that for any injective~${f: \nplus \to \n}$ and sequences~${s, t \in T_f}$ if both~${s_i > 0}$ and~${t_i > 0}$ hold, then we have~${s_i = t_i}$, for each common index~${i < \min(|s|, |t|)}$.
\end{lemma}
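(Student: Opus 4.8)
The plan is to use condition~(i) from the definition of~${T_f}$ directly, together with the injectivity of~${f}$. Fix an index~${i < \min(|s|, |t|)}$ and suppose both~${s_i > 0}$ and~${t_i > 0}$. Since~${s \in T_f}$ and~${i < |s|}$ with~${s_i > 0}$, clause~(i) of the membership criterion for~${T_f}$ applied to~${s}$ yields~${f(s_i) = i}$. Symmetrically, since~${t \in T_f}$ and~${i < |t|}$ with~${t_i > 0}$, the same clause applied to~${t}$ gives~${f(t_i) = i}$.

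Combining these, I obtain~${f(s_i) = i = f(t_i)}$. Because~${s_i > 0}$ and~${t_i > 0}$, both arguments lie in the domain~${\nplus}$ of~${f}$, so I may invoke the injectivity of~${f}$ to conclude~${s_i = t_i}$, which is exactly the claim.

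No genuine obstacle is anticipated: the argument is a single application of the defining clause~(i) to each of the two sequences, followed by one use of injectivity, and it uses neither clause~(ii) nor the Kleene--Brouwer order at all. The only point requiring any care is to note that the hypothesis~${s_i, t_i > 0}$ is precisely what places both values in the domain~${\nplus}$ on which~${f}$ is defined and injective; without positivity, clause~(i) says nothing. Since the entire deduction is a quantifier-free consequence of the hypotheses, it goes through in~${\rca_0}$ with no appeal to comprehension or induction.
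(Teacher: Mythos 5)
Your proof is correct and is precisely the argument the paper gives (the paper states it in one line: condition (i) plus injectivity of $f$); you have simply written out the details. No issues.
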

\begin{proof}
	This follows immediately from condition i) together with the injectivity of~${f}$.
\end{proof}
With this lemma, we see that~${T_f}$ behaves like a binary tree for injective~${f}$. In fact, it admits even more structure: E.g., it is easy two see that any two sequences in~${T_f}$ have a common weak extension if~${f}$ is injective.

The next lemma contains the main arguments required for proving Theorem~\ref{thm:exp_perfect}. It constructs the claimed strictly ascending sequences:
\begin{lemma}\label{lem:T_f_perfect_sequence}
	The system~${\rca_0}$ proves that for any injective~${f: \nplus \to \n}$, either the range of~${f}$ exists as a set or any infinite suborder of~${T_f}$ contains a sequence~${(s^n)_{n \in \n}}$ satisfying the following:
	\begin{enumerate}[label=\alph*)]
		\item The lengths of its members are strictly increasing.
		\item Each member is a weak extension of all previous ones.
		\item The inequality~${s^n < s^{n+1}}$ holds for infinitely many indices~${n \in \n}$.
		\item The sequence is strictly ascending.
	\end{enumerate}
\end{lemma}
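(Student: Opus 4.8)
The plan is to exploit Lemma~\ref{lem:pos_equal} to reduce each~${s \in T_f}$ to its set of positive coordinates and then to reformulate both ``weak extension'' and the Kleene--Brouwer comparison purely in terms of this data. Writing~${A(s) := \{i < |s| : s_i > 0\}}$, Lemma~\ref{lem:pos_equal} shows that~${s}$ is completely determined by~${A(s)}$ together with~${|s|}$ (the nonzero entries being forced to be the corresponding~${f}$-preimages), that~${i \in A(s)}$ forces~${i \in \rng(f)}$, and that any~${i \in \rng(f)}$ whose preimage is below~${|s|}$ is forced into~${A(s)}$. From this I read off the two facts I want: first, that~${t}$ is a weak extension of~${s}$ exactly when~${|t| \geq |s|}$ and~${A(s) \subseteq A(t)}$; and second, that for such a pair with~${|t| > |s|}$ one has~${s < t}$ in the Kleene--Brouwer order iff~${A(t)}$ contains some coordinate below~${|s|}$ missing from~${A(s)}$ (a new coordinate is \emph{switched on} in the old block), while otherwise~${t}$ is a genuine extension and~${t < s}$.

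Two further observations drive the construction. Since the optional coordinates of a fixed~${s}$ are finitely many and their preimages are exactly the recorded entries~${s_i}$, every~${t \in S}$ whose length exceeds both~${|s|}$ and~${\max_{i \in A(s)} s_i}$ is automatically a weak extension of~${s}$. Hence, as long as~${S}$ has unbounded length (which holds whenever~${S}$ is infinite, since~${T_f}$ contains only finitely many sequences of each length), $\rca_0$ can build by primitive recursion a sequence~${s^0, s^1, \dots \in S}$ with strictly increasing lengths in which each term weakly extends -- and therefore, by transitivity, all previous terms; this already secures conditions~a) and~b). I then make a case distinction on the statement~${P}$ that~${s^n < s^{n+1}}$ holds for infinitely many~${n}$.

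Under~${\neg P}$, fix the last switch-on step~${N}$. The key point is that no coordinate below~${|s^n|}$ can still be pending for~${n > N}$: any range element~${c < |s^n|}$ outside~${A(s^n)}$ would be forced into~${A(s^m)}$ once~${|s^m|}$ exceeds its preimage, and its first appearance would be a switch-on at a step after~${N}$, contradicting the choice of~${N}$. Thus every~${s^n}$ with~${n > N}$ is \emph{saturated}, i.e.~${A(s^n) = \rng(f) \cap [0,|s^n|)}$, and the equivalence~${i \in \rng(f) \leftrightarrow s^{m(i)}_i > 0}$, where~${m(i)}$ is the least index~${n > N}$ with~${|s^n| > i}$, is a~${\Delta^0_1}$ definition of the range. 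So~${\rng(f)}$ exists as a set, giving the first disjunct.

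Under~${P}$, condition~c) is immediate, and the remaining work -- which I expect to be the main obstacle -- is to promote the sequence to a genuinely \emph{strictly ascending} one as required by~d). The difficulty is that the switch-ons march rightward, so consecutive terms of~${(s^n)}$ need not increase and a naive subsequence can still descend. The fix is a persistence argument: each switch-on step exhibits a non-saturated term, and once a coordinate below~${|s^n|}$ has been switched on it stays on in every later weak extension, so a non-saturated~${s^n}$ lies strictly below an entire tail of the sequence. Iterating -- pick a non-saturated term, wait for its pending coordinate to resolve, then pick the next non-saturated term beyond that point -- yields a subsequence~${u^0 < u^1 < u^2 < \cdots}$, the searches terminating precisely because~${P}$ forces infinitely many non-saturated terms while unbounded length forces every pending coordinate to resolve. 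This~${(u^k)}$ is a weak-extension chain of strictly increasing lengths that is strictly ascending, establishing a)--d) and hence the second disjunct. I would finally note that proving this per-suborder disjunction suffices, since a single suborder producing~${\rng(f)}$ already discharges the global first alternative.
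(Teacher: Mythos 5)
Your proposal is correct and follows essentially the same route as the paper's proof: the counting argument via the positive-coordinate pattern for a), skipping past the recorded entries to force weak extensions for b), the dichotomy in which finitely many ``switch-ons'' let one compute the range of~$f$ from a saturated tail for c), and the persistence of a switched-on coordinate (blocking both the proper-extension and the coordinate-decrease cases) to make the switch-on subsequence strictly ascending for d). The only differences are cosmetic: you phrase everything through~$A(s)$ and prove the disjunction directly by a case split, where the paper assumes the range does not exist and argues by contradiction.
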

While we are mostly interested in d) for proving Theorem~\ref{thm:exp_perfect}, items a)-c) reveal the steps we take to derive our claim. Clearly, c) is subsumed by d) and, as we will see, implied by a) and b).
\begin{proof}[Proof of Lemma~\ref{lem:T_f_perfect_sequence}]
	Assume that the range of~${f}$ does not exist as a set. Let~${(s^n)_{n \in \n}}$ enumerate the elements of a given infinite subset of~${T_f}$. In particular, its members are pairwise distinct.
	
	For a), we show that for each index~${n \in \n}$, there exists~${m > n}$ such that~${|s^n| < |s^m|}$ holds. Assume, for contradiction, that~${n \in \n}$ is a counterexample to this claim. Let~${g: T_f \to 2^*}$ be the function that maps each~${s \in T_f}$ to a sequence~${g(s)}$ of the same length such that~${s_i = 0}$ holds if and only if~${g(s)_i = 0}$ does, for all indices~${i < |s| = |g(s)|}$. By Lemma~\ref{lem:pos_equal}, this function~${g}$ is injective. It is easy to see that~${2^*}$ contains exactly~${2^{|s_n| + 1} - 1}$ sequences of length less than or equal to~${|s_n|}$. Hence, we conclude that there must be indices~${i, j \in \n}$ with~${n \leq i < j < n + 2^{|s_n| + 1}}$ satisfying~${g(s_i) = g(s_j)}$. Since~${g}$ is injective, we deduce~${s^i = s^j}$. With~${i < j}$, this contradicts our initial assumption that all members in~${(s^n)_{n \in \n}}$ are pairwise distinct. We conclude that for each index~${n \in \n}$, there is some~${m > n}$ with~${|s^n| < |s^m|}$. By moving to a subsequence, we can, w.l.o.g., assume that~${|s^n| < |s^{n+1}|}$ holds for all~${n \in \n}$.
	
	For b), we define a (primitive recursive) function~${g: \n \to \n}$ with
	\begin{equation*}
		g(n) :=
		\begin{cases}
			0 & \text{if~${n = 0}$,}\\
			1 + \max(\{g(n - 1)\} \cup \{(s^{g(n-1)})_i \mid i < |s^{g(n-1)}|\}) & \text{otherwise,}
		\end{cases}
	\end{equation*}
	for all~${n \in \n}$. Immediately, we see that~${g}$ is strictly increasing. Moreover, we claim that~${s^{g(n+1)}}$ is a weak extension of~${s^{g(n)}}$ for all~${n \in \n}$. Consider an arbitrary such index. First, we have~${|s^{g(n)}| < |s^{g(n+1)}|}$ since both~${g}$ and the lengths of the members in~${(s^n)_{n \in \n}}$ are strictly increasing. Now, if~${(s^{g(n)})_i}$ is positive for some index~${i < |s^{g(n)}|}$, then~${f((s^{g(n)})_i) = i}$ holds. By definition of~${g(n+1)}$, we have~${(s^{g(n)})_i < g(n+1)}$. Now, by the fact that the lengths of our members in~${(s^n)_{n \in \n}}$ are strictly increasing, we have~${g(n+1) \leq |s^{g(n+1)}|}$. In conclusion, the inequality~${(s^{g(n)})_i < |s^{g(n+1)}|}$ together with the fact~${f((s^{g(n)})_i) = i}$ and condition ii) lead to~${(s^{g(n+1)})_i > 0}$. By Lemma~\ref{lem:pos_equal} and the injectivity of~${f}$, we arrive at~${(s^{g(n)})_i = (s^{g(n+1)})_i}$. Hence,~${s^{g(n+1)}}$ is a weak extension of~${s^{g(n)}}$.
	By moving to the subsequence induced by~${g}$, we can assume, w.l.o.g., that~${s^{n+1}}$ is a weak extension of~${s^n}$. It is easy to see that the relation of being a weak extension is transitive.
	
	As we will see, c) now automatically follows from a) and b). Assume, for contradiction, that there is some index~${n \in \n}$ starting from which~${s^m \geq s^{m+1}}$ holds for all~${m \geq n}$. By a), we know that each of these inequalities must be strict. Given~${m \geq n}$, assume that~${s^m > s^{m+1}}$ holds since there is a common index~${i < \min(|s^m|, |s^{m+1}|)}$ with both~${(s^m)_i > (s^{m+1})_i}$ and~${(s^m)_j = (s^{m+1})_j}$ for all~${j < i}$. Already the first property contradicts our assumption that~${s^{m+1}}$ is a weak extension of~${s^m}$. Thus, by definition of the Kleene-Brouwer order,~${s^{m+1}}$ must be a proper extension of~${s^m}$, for all~${m \geq n}$. We claim that this yields the range of~${f}$ as a set. To be precise, let~${X}$ be the set that contains a number~${i \in \n}$ if and only if~${(s^{\max(n,i) + 1})_i > 0}$ holds. By a), it is clear that we have~${i < |s^{\max(n,i) + 1}|}$, which guarantees the expression ``$(s^{\max(n,i) + 1})_i > 0$'' to be well-defined. We claim that this set~${X}$, which can be constructed using~${\Delta^0_1}$-comprehension, is the range of~${f}$. First, consider~${i \in X}$. This implies~${(s^{\max(n,i) + 1})_i > 0}$. By definition of~${T_f}$, we have~${f((s^{\max(n,i) + 1})_i) = i}$. For the other direction, assume that~${i \in \n}$ lies in the range of~${f}$ witnessed by~${j \in \nplus}$, i.e.~we have~${f(j) = i}$. From a), we know~${|s^{\max(n, i, j) + 1}| > i, j}$. Hence, condition ii) implies~${(s^{\max(n, i, j) + 1})_i > 0}$. Now, since~${s^{\max(n, i, j) + 1}}$ is an extension of~${s^{\max(n, i) + 1}}$ by assumption, we conclude that the inequality~${(s^{\max(n, i) + 1})_i > 0}$ holds and, therefore,~${i \in X}$. However, the existence of~${X}$ contradicts our assumption that the range of~${f}$ cannot be given by a set. Finally, this contradiction leads to~${s^m < s^{m+1}}$ for some~${m \geq n}$.
	
	For d), let~${h: \n \to \n}$ be a strictly increasing function satisfying~${s^{h(n)} < s^{h(n) + 1}}$ for all~${n \in \n}$. Such a function can easily be extracted from a witness for c).
	Let~${n, m \in \n}$ be arbitrary indices with~${n < m}$. We claim that~${s^{h(n)} < s^{h(m)}}$ holds. For this, consider the inequality~${s^{h(n)} < s^{h(n) + 1}}$. Since, by a),~${s^{h(n) + 1}}$ is strictly longer than~${s^{h(n)}}$, there must be a common index~${i < \min(|s^{h(n)}|, |s^{h(n) + 1}|)}$ satisfying~${(s^{h(n)})_i < (s^{h(n) + 1})_i}$ and~${(s^{h(n)})_j = (s^{h(n) + 1})_j}$ for all~${j < i}$. By Lemma~\ref{lem:pos_equal} and the injectivity of~${f}$, the former entails~${(s^{h(n)})_i = 0}$. Of course, we also have the inequality~${0 < (s^{h(n) + 1})_i}$. Now, from~${h(n) + 1 \leq h(m)}$ and b), we derive that~${s^{h(m)}}$ is a weak extension of~${s^{h(n) + 1}}$. In particular, this implies~${0 < (s^{h(m)})_i }$. Now, because of~${(s^{h(n)})_i = 0}$, the sequence~${s^{h(m)}}$ cannot be an extension of~${s^{h(n)}}$. Assume, for contradiction, that~${s^{h(n)} \geq s^{h(m)}}$ holds. Combining our thoughts from above with the definition of the Kleene-Brouwer order, we conclude that there must be a common index~${i' < \min(|s^{h(n)}|, |s^{h(m)}|)}$ with~${(s^{h(m)})_{i'} < (s^{h(n)})_{i'}}$ and~${(s^{h(m)})_{j'} = (s^{h(n)})_{j'}}$ for all~${j' < i'}$. However, the former contradicts b), according to which~${s^{h(m)}}$ must be a weak extension of~${s^{h(n)}}$. From this contradiction, we conclude~${s^{h(n)} < s^{h(m)}}$. Finally, by moving to the subsequence induced by~${h}$, we arrive at our claim.
\end{proof}
The only missing piece for deriving the negation of Theorem~\ref{thm:exp_perfect}~b) is given by the next result:
\begin{lemma}\label{lem:2_T_f_ill_founded}
	The system~${\rca_0}$ proves that for any injective~${f: \nplus \to \n}$, the order~${2^{T_f}}$ is ill-founded.
\end{lemma}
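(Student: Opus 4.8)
The plan is to exhibit, uniformly and within~${\rca_0}$, an explicit infinite descending sequence in~${2^{T_f}}$, closely following Hirst (cf.~\cite[Theorem~2.6]{HirstExp}). The guiding intuition is that~${T_f}$ is the classical tree whose unique (potential) infinite path~${P}$ encodes the characteristic function of~${\rng(f)}$: setting~${P_i > 0}$ exactly when~${i \in \rng(f)}$ would yield an infinite branch, and since longer sequences are Kleene--Brouwer-smaller, the initial segments~${P \upharpoonright n}$ would form a strictly descending sequence in~${T_f}$ and hence make~${2^{P \upharpoonright n}}$ descend in~${2^{T_f}}$. The obstruction is precisely that~${P}$ need not exist as a set; the point of passing to the exponential~${2^{T_f}}$ is that the non-monotonicity of the \emph{computable} finite approximations to~${P}$ can be absorbed into the sum structure.

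Concretely, I would define for each~${n \in \n}$ the approximation~${s^{(n)} \in \n^*}$ of length~${n}$ by a bounded (hence~${\Delta^0_1}$, in fact primitive recursive) search: put~${s^{(n)}_i := f^{-1}(i)}$ whenever there is some~${j < n}$ with~${f(j) = i}$ (the preimage being unique by injectivity), and~${s^{(n)}_i := 0}$ otherwise, for each~${i < n}$. A direct check against conditions~i) and~ii) shows~${s^{(n)} \in T_f}$, and by Lemma~\ref{lem:pos_equal} these are exactly the Kleene--Brouwer-least members of~${T_f}$ of each length. Passing from~${s^{(n)}}$ to~${s^{(n+1)}}$ is of one of two kinds: an \emph{append} step, where~${s^{(n+1)}}$ properly extends~${s^{(n)}}$, and a \emph{revision} step, where a newly discovered value~${f(n)}$ forces a previously-zero entry to become positive. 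I would then encode each~${s^{(n)}}$ as a finite exponential sum~${\sigma^n \in 2^{T_f}}$ whose terms are certain nodes of~${T_f}$ read off from~${s^{(n)}}$, arranged so that the comparison of~${\sigma^n}$ with~${\sigma^{n+1}}$ reduces, via the first-difference clauses of both the Kleene--Brouwer order (Lemma~\ref{lem:T_f_linear_order}) and the order on~${2^{T_f}}$, to comparing the Kleene--Brouwer-largest node on which the two sums differ. The encoding is chosen so that in \emph{both} step types this dominant differing node lies in~${\sigma^n}$, yielding~${\sigma^{n+1} < \sigma^n}$ in~${2^{T_f}}$ for every~${n}$, and hence an infinite descending sequence. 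Throughout, Lemma~\ref{lem:pos_equal} is the workhorse: since two members of~${T_f}$ agreeing at a common index whenever both are positive, the only differences between successive approximations are~${0}$-versus-positive flips, whose direction under the Kleene--Brouwer order we fully control.

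The main obstacle, as in Hirst's original argument, is the \emph{revision} step: a single exponential term~${2^{s^{(n)}}}$ would strictly \emph{increase} when a zero entry turns positive (the first difference raises the Kleene--Brouwer value), so the surrounding sum must be set up so that the net effect in~${2^{T_f}}$ is nonetheless a strict decrease. The delicate part is therefore choosing the right finite-sum encoding and verifying the decrease in the revision case by the symmetric-difference analysis, rather than the easy append case. A secondary point of care is that the whole construction must be \emph{uniform} in~${f}$ and carried out in~${\rca_0}$ alone, with no case distinction on whether~${\rng(f)}$ exists as a set: the same sequence~${(\sigma^n)_{n \in \n}}$ is descending both when~${T_f}$ is ill-founded (where~${P}$ exists and the statement is nearly immediate) and when~${T_f}$ is a well order (the case relevant to negating Theorem~\ref{thm:exp_perfect}~b) together with Lemma~\ref{lem:T_f_perfect_sequence}).
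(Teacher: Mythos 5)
Your proposal takes the route that the paper explicitly declines to take: it attempts to reconstruct Hirst's original argument via the computable finite approximations $s^{(n)}$ to the potential path, whereas the paper instead invokes Lemma~\ref{lem:T_f_perfect_sequence} to extract from $T_f$ a strictly ascending sequence of strictly increasing lengths, and then defines a recursive family of maps $g_p \colon \subseteq T_f \to 2^{T_f}$ satisfying $g_p(s) > g_p(t)$ whenever $p*s < p*t$ and $|s| < |t|$; applying $g_{\langle\rangle}$ to the ascending sequence immediately yields the descending sequence in $2^{T_f}$. The advantage of the paper's route is that the order-reversal only ever needs to be verified for pairs that are simultaneously KB-increasing and length-increasing, which is a clean induction; your route must instead cope with the fact that the approximations $s^{(n)}$ are \emph{not} monotone in the Kleene--Brouwer order (they move down at append steps and up at revision steps).

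This is where the genuine gap lies: the entire content of the lemma is the choice of the finite exponential sums $\sigma^n$ and the verification that $\sigma^{n+1} < \sigma^n$ at \emph{every} step, and your proposal never specifies the encoding --- it only asserts that one can be ``chosen so that'' the dominant differing term always favours a decrease, and you yourself flag the revision case as ``the delicate part.'' This cannot be waved through, because the natural candidates all fail. A single term $2^{s^{(n)}}$ decreases at append steps but increases at revision steps. The sum $\sum_{i \le n} 2^{s^{(n)} \restriction i}$ over all prefixes increases at both step types (appending adjoins a KB-smaller exponent at the end of the sum, which makes the sum \emph{larger} in $2^{T_f}$, and a revision raises the first exponent that changes). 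The sum over prefixes at zero-entries (which is essentially the paper's $g_{\langle\rangle}$) decreases at revision steps but increases when a $0$ is appended. Moreover, one cannot simply pass to the subsequence of revision steps, since the increases accumulated between two revisions can outweigh the single decrease at a revision. So either you must exhibit a genuinely cleverer encoding (which is the heart of Hirst's proof and is absent here), or you must first establish something like Lemma~\ref{lem:T_f_perfect_sequence} to restore monotonicity before mapping into $2^{T_f}$ --- which is exactly what the paper does. As written, the proposal is a plan with the decisive construction missing. A minor additional point: the paper does make a case distinction on whether $\rng(f)$ exists as a set (handling that case separately and easily), so insisting on a construction uniform in that respect is an unnecessary extra burden.
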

For this result, we could simply refer to the proof of \cite[Theorem~2.6]{HirstExp}. However, in the presence of Lemma~\ref{lem:T_f_perfect_sequence}, there exists a simpler argument:
\begin{proof}[Proof of Lemma~\ref{lem:2_T_f_ill_founded}]
	Although it is not important for our theorem, let us consider the case where the range of~${f}$ exists as a set~${X}$. Then, it is easy to construct an ill-founded sequence~${(s^n)_{n \in \n}}$ in~${T_f}$ by defining~${s^n}$, for each~${n \in \n}$, to be the sequence of length~${n}$ with~${f((s^n)_i) = i}$ if~${i \in X}$ and~${(s^n)_i = 0}$ if~${i \notin X}$, for each~${i < n}$. Using~${X}$, we can decide whether we are in the former or latter case, for each index. Moreover, if we are in the former case, a simple search, that is guaranteed to terminate by definition of~${X}$, lets us find a witness~${(s^n)_i}$, for each such~${i < n}$. Clearly, this ill-founded sequence in~${T_f}$ can be translated to one in~${2^{T_f}}$.
	
	We continue under the assumption that the range of~${f}$ does \emph{not} exist as a set.	
	We begin by defining a family of partial maps~${g_p:\subseteq T_f \to 2^{T_f}}$. For each sequence~${p * s \in T_f}$, we set	
	\begin{equation*}
		g_p(s) :=
		\begin{cases}
			0 & \text{if~${s = \langle \rangle}$,}\\
			2^p + g_{p * \langle 0 \rangle}(s') & \text{if~${s = \langle 0 \rangle * s'}$,}\\
			g_{p * \langle n \rangle}(s') & \text{if~${s = \langle n \rangle * s'}$ for positive~${n}$.}
		\end{cases}
	\end{equation*}
	This family is defined simultaneously for all prefixes~${p \in T_f}$ by induction along the length of~${s}$. Under the assumption~${p * s \in T_f}$, we may conclude~${(p * \langle 0 \rangle) * s' \in T_f}$ in the second case and~${(p * \langle n \rangle) * s' \in T_f}$ in the third one. Hence, our recursive invocations are well-formed. Moreover, we clearly apply a shorter sequence~${s'}$ to our family in each case, which ensures that~${g_p(s)}$ can, in fact, be defined using recursion.
	
	We still have to verify that the values of each~${g_p}$ for~${p \in T_f}$ lie within~${2^{T_f}}$. For this, we prove by induction along the length of~${s}$ that~${g_p(s) \in 2^{T_f}}$ holds and that each exponent occurring in~${g_p(s)}$ is less than or equal to~${p}$ in~${T_f}$, for each~${p * s \in T_f}$.
	Let~${p * s \in T_f}$ be arbitrary and assume that our claim has already been shown for all sequences~${q * t \in T_f}$ with~${|t| < |s|}$. If~${s}$ is the empty sequence, our claim is trivial. Otherwise, if we are in the second case with~${s = \langle 0 \rangle * s'}$, our induction hypothesis tells us that~${g_{p * \langle 0 \rangle}(s')}$ lies in~${2^{T_f}}$ and all of its exponents are less than or equal to~${p * \langle 0 \rangle}$. Hence, they lie strictly below~${p}$, which entails that~${g_p(s) \in 2^{T_f}}$ holds. Of course, all exponents in~${g_p(s)}$ now also are less than or equal to~${p}$. The case for~${s = \langle n \rangle * s'}$ with~${n > 0}$ is even more direct.
	
	Using induction along~${|s| + |t|}$, we show that~${g_p(s) > g_p(t)}$ holds for any two sequences~${p * s, p * t \in T_f}$ with~${p * s < p * t}$ and~${|s| < |t|}$. The former implies that~${s}$ cannot be empty, while the latter implies that~${t}$ cannot be empty. Thus, let~${n, m \in \n}$ and~${s', t' \in \n^*}$ be such that~${s = \langle n \rangle * s'}$ and~${t = \langle m \rangle * t'}$ hold. If both~${n}$ and~${m}$ are positive, they must be equal by Lemma~\ref{lem:pos_equal}. Using the induction hypothesis, we conclude~${g_p(s) = g_{p * \langle n \rangle}(s') > g_{p * \langle m \rangle}(t') = t}$.
	If both~${n}$ and~${m}$ are equal to zero, we have~${g_p(s) = 2^p + g_{p * \langle 0 \rangle}(s') > 2^p + g_{p * \langle 0 \rangle}(t') = g_p(t)}$. Next, if~${n}$ is equal to zero and~${m}$ is positive, we have~${g_p(s) > g_p(t)}$ since~${g_p(s)}$ begins with~${2^p}$ while all exponents in~${g_p(t) = g_{p * \langle m \rangle}(t')}$ are less than or equal to~${p * \langle m \rangle}$ and, hence, lie strictly below~${p}$. Finally, if~${n}$ is positive and~${m}$ is equal to zero, this contradicts our assumption~${p * s < p * t}$.
	
	Finally, we are looking for a strictly ascending sequence~${(s^n)_{n \in \n} \subseteq T_f}$ such that its members have increasing length. In order to invoke Lemma~\ref{lem:T_f_perfect_sequence}, we require an infinite subset of~${T_f}$. We can simply take~${T_f}$ itself, but for this, we need to verify that it is, indeed, an infinite set: For each~${n \in \n}$, we can define a sequence~${s \in T_f}$ of length~${n}$ by setting~${s_i}$ to be the unique witness satisfying~${f(s_i) = i}$ if it exists and~${s_i < n}$ holds, for each index~${i < n}$. Otherwise, of course, we set~${s_i := 0}$. For each index~${i < n}$, the value of~${s_i}$ can be determined by a bounded search. Hence, it can be constructed using the axioms of~${\rca_0}$. Now that we have found a sequence in~${T_f}$ for each length, we conclude that~${T_f}$ is infinite. Thus, we invoke Lemma~\ref{lem:T_f_perfect_sequence}, which yields an infinite sequence~${(s^n)_{n \in \n} \subseteq T_f}$ that, in particular, satisfies~a) and~d), i.e.~its members strictly increase in length and~${(s^n)_{n \in \n}}$ is strictly ascending. Therefore,~${(g_{\langle\rangle}(s^n))_{n \in \n}}$ is infinitely descending in~${2^{T_f}}$.
\end{proof}
\begin{proof}[Proof of Theorem~\ref{thm:exp_perfect}]
	Assume that~${2^\alpha}$ is well-founded for each well order~${\alpha}$ such that each infinite suborder of~${\alpha}$ contains a strictly ascending sequence. In order to derive arithmetical comprehension, we consider an arbitrary injective function~${f: \nplus \to \n}$. If we are able to construct the range of any such~${f}$, then our claim follows by \cite[Lemma III.1.3]{SimpsonBook}.
	
	Consider~${T_f}$, which is not only a linear order by Lemma~\ref{lem:T_f_linear_order} but also, if the range of~${f}$ does not exist as a set, satisfies the property that each infinite suborder contains a strictly ascending sequence by Lemma~\ref{lem:T_f_perfect_sequence}. It is easy to see that the latter property already entails that~${T_f}$ must be well-founded: Consider, for contradiction, an infinite descending sequence in~${T_f}$. Using the usual arguments, we can assume that the codes of its members are strictly increasing. This allows us to consider the collection of all its members as an infinite subset of~${T_f}$ by~${\Delta^0_1}$-comprehension. Now, already Lemma~\ref{lem:T_f_perfect_sequence}~c) leads to a contradiction.
	
	By Lemma~\ref{lem:2_T_f_ill_founded}, we know that~${2^{T_f}}$ is ill-founded. Hence, it the range of~${f}$ does not exist as a set, then~${T_f}$ contradicts our assumption. We conclude that the range of~${f}$ is a set and, since~${f}$ was an arbitrary function, that arithmetical comprehension holds by \cite[Lemma III.1.3]{SimpsonBook}.
\end{proof}

\section{Bootstrapping arithmetical comprehension}\label{sec:boot}

In this section, we connect Theorem~\ref{thm:exp_perfect} with the uniform Kruskal theorem so that together with Proposition~\ref{prop:ukt_ipp}, it yields our proof of Theorem~\ref{thm:UKT_RCA}. Similar to Section~\ref{sec:ipp}, we require a certain class of \po-dilators for this:

\begin{definition}[{\cite[Example~2.3]{FRWUKT}}]\label{def:W_alpha}
	For each well order~${\alpha}$, we define a functor~${W_\alpha}$ that maps each partial order~${X}$ to
	\begin{equation*}
		W_\alpha(X) := 1 + (\alpha \times X)\period
	\end{equation*}
	Given an embedding~${f: X \to Y}$ between linear orders~${X}$ and~${Y}$, we define
	\begin{equation*}
		W_\alpha(f)(\sigma) :=
		\begin{cases}
			(0, 0) & \text{if~${\sigma = (0, 0)}$,}\\
			(1, (\beta, f(x))) & \text{if~${\sigma = (1, (\beta, x))}$,}
		\end{cases}
	\end{equation*}
	for all~${\sigma \in W_\alpha(X)}$.
	The support of each such~${\sigma}$ is given by
	\begin{equation*}
		\supp_X(\sigma) :=
		\begin{cases}
			\emptyset & \text{if~${\sigma = (0, 0)}$,}\\
			\{x\} & \text{if~${\sigma = (1, (\beta, x))}$.}
		\end{cases}
	\end{equation*}
	In order to ease notation, we write~${0}$ and~${(\beta, x)}$ instead of~${(0, 0)}$ and~${(1, (\beta, x))}$, respectively, for elements in~${W_\alpha(X)}$ given a well order~${\alpha}$ and a partial order~${X}$.
\end{definition}
In~${\rca_0}$, it can be seen that~${W_\alpha}$ is a \po-dilator for each well order~${\alpha}$ (cf.~\cite[Example~2.6]{FRWUKT}). Additionally, this \po-dilator is normal (cf.~\cite[Example~3.3]{FRWUKT}). In fact, both of these results also hold if~${\alpha}$ is ill-founded.

However, the system~${\rca_0}$ is not able to prove that~${W_\alpha}$ is a \wpo-dilator for arbitrary well orders~${\alpha}$: Assuming that the ascending descending sequence principle does not hold, we find an infinite well order~${\alpha}$ such that even~${\alpha'}$, which results from~${\alpha}$ if we consider its opposite order, is a well order. It is easy to see that~${W_\alpha(\alpha')}$ contains a bad sequence (cf.~\cite{Towsner}, see also \cite[Lemma~3.2]{FUKruskal}). This property of~${W_\alpha}$ is the reason why Freund, Rathjen, and Weiermann proved their equivalence using the chain antichain principle and why Freund and the author required the ascending descending sequence principle.

\begin{lemma}\label{lem:2_alpha_W_alpha}
	The system~${\rca_0}$ proves that for any well order~${\alpha}$ and Kruskal fixed point~${X}$ of~${W_\alpha}$, there exists a quasi embedding from~${2^\alpha}$ into~${X}$.
\end{lemma}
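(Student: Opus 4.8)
The plan is to build the quasi embedding $e \colon 2^\alpha \to X$ by recursion on the length of the exponent sequence, peeling off the \emph{largest} exponent at each step. Writing a nonzero element of $2^\alpha$ as $\sigma = 2^{\beta_0} + \sigma'$ with $\sigma' = 2^{\beta_1} + \dots + 2^{\beta_{n-1}}$ (so that $\beta_0 > \beta_1 > \dots$), and letting $\langle\rangle$ denote the empty term (the minimum of $2^\alpha$), I set
\[
	e(\langle\rangle) := \kappa(0) \quad\text{and}\quad e(\sigma) := \kappa((\beta_0, e(\sigma'))),
\]
using the abbreviated notation of Definition~\ref{def:W_alpha} for elements of $W_\alpha(X)$. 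Since $\kappa$ is given as a set and the recursion strictly decreases the length of the exponent sequence, $e$ is definable by primitive recursion and hence exists in~$\rca_0$. Note that $\alpha$ being a well order is not used beyond its linearity; the whole construction only needs that $\alpha$ is linear and that $X$ carries the Kruskal fixed point structure.

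It then remains to verify the reflecting property, i.e.\ that $e(\sigma) \leq_X e(\tau)$ implies $\sigma \leq_{2^\alpha} \tau$, and I would prove this by strong induction on $n + m$, where $n$ and $m$ are the lengths of $\sigma$ and $\tau$. The two degenerate cases are immediate: if $n = 0$ then $\sigma = \langle\rangle$ is the minimum, so $\sigma \leq \tau$ holds unconditionally; and if $n > 0 = m$ then $e(\tau) = \kappa(0)$ has empty support, so neither clause of the Kruskal fixed point condition can make $\kappa((\beta_0, e(\sigma'))) \leq_X \kappa(0)$ true, and the implication is vacuous.

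For the main case $n, m > 0$, write $\tau = 2^{\gamma_0} + \tau'$ and feed the hypothesis $\kappa((\beta_0, e(\sigma'))) \leq_X \kappa((\gamma_0, e(\tau')))$ into the defining equivalence of the Kruskal fixed point. Clause~(i) yields $(\beta_0, e(\sigma')) \leq_{W_\alpha(X)} (\gamma_0, e(\tau'))$, that is $\beta_0 \leq_\alpha \gamma_0$ together with $e(\sigma') \leq_X e(\tau')$; since $\alpha$ is linear I split into the case $\beta_0 < \gamma_0$ (which already gives $\sigma < \tau$ by the first-difference clause of the order on $2^\alpha$) and the case $\beta_0 = \gamma_0$ (where the induction hypothesis gives $\sigma' \leq \tau'$, whence $\sigma \leq \tau$ by a short prepending observation). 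Clause~(ii) reduces to $e(\sigma) \leq_X e(\tau')$, because $\supp_X((\gamma_0, e(\tau'))) = \{e(\tau')\}$; applying the induction hypothesis to the pair $(\sigma, \tau')$, whose total length is $n + m - 1$, gives $\sigma \leq \tau'$, and since the tail satisfies $\tau' <_{2^\alpha} \tau$ I conclude $\sigma \leq \tau$.

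The two places that carry the real content, and where I expect the bookkeeping to be most delicate, are this prepending observation and the choice of induction parameter in clause~(ii). For the former I must check that prepending a common leading term preserves $\leq_{2^\alpha}$: from $\sigma' \leq \tau'$ and $\beta_0 = \gamma_0$ one gets $2^{\beta_0} + \sigma' \leq 2^{\beta_0} + \tau'$ by tracking whether $\sigma' \leq \tau'$ holds via proper extension or via a first difference at some index $i$, the latter shifting to index $i + 1$ while the newly shared position $0$ keeps the equal-prefix requirement intact. The underlying subtlety throughout is that the two clauses of the Kruskal fixed point condition must be matched against the two clauses of the order on $2^\alpha$ (extension versus first difference); and in clause~(ii) one must recurse on $(\sigma, \tau')$ rather than on $(\sigma', \tau')$, which is precisely why the induction is set up on $n + m$ rather than on $m$ alone.
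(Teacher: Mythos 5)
Your construction is correct and is, in substance, the same map as the paper's: the paper sends $2^{\beta_0}+\dots+2^{\beta_{n-1}}$ to $\langle\beta_0,\dots,\beta_{n-1}\rangle\in\alpha^*$ and then composes with the quasi embedding of $\alpha^*$ into $X$ from \cite[Example~3.9]{FRWUKT}, which is exactly your recursion $e(2^{\beta_0}+\sigma')=\kappa((\beta_0,e(\sigma')))$ computed explicitly. The only difference is that you verify the reflection property directly (correctly matching the two Kruskal clauses against the two clauses of the order on $2^\alpha$, including the recursion on $(\sigma,\tau')$ in clause~(ii)), whereas the paper outsources that verification to the cited example.
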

\begin{proof}
	In \cite[Example~3.9]{FRWUKT}, a quasi embedding from~${\alpha^*}$ into~${X}$ is constructed. It can easily be seen that~${2^\alpha}$ quasi embeds into~${\alpha^*}$ by mapping~${2^{\beta_0} + \dots + 2^{\beta_{n-1}}}$ in~${2^\alpha}$ to~${\langle \beta_0, \dots, \beta_{n-1} \rangle}$ in~${\alpha^*}$.
\end{proof}

\begin{lemma}\label{lem:UKT_ACA}
	The system~${\rca_0}$ proves that the uniform Kruskal theorem implies arithmetical comprehension.
\end{lemma}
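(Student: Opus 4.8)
The plan is to feed Proposition~\ref{prop:ukt_ipp} into Theorem~\ref{thm:exp_perfect}. Assuming the uniform Kruskal theorem, Proposition~\ref{prop:ukt_ipp} gives us the infinite pigeonhole principle, which I then use freely. By Theorem~\ref{thm:exp_perfect}, arithmetical comprehension follows once I establish its statement~b), namely that $2^\alpha$ is well-founded for every well order $\alpha$ each of whose infinite suborders contains a strictly ascending sequence. So I fix such an $\alpha$. Since $2^\alpha$ is a linear order, it is enough to exhibit a quasi embedding of $2^\alpha$ into some well partial order: quasi embeddings reflect well partial orders, and a linear well partial order is well-founded. Lemma~\ref{lem:2_alpha_W_alpha} is exactly the tool for this, embedding $2^\alpha$ into any Kruskal fixed point of $W_\alpha$; hence everything reduces to applying the uniform Kruskal theorem to $W_\alpha$, for which $W_\alpha$ must be a normal \wpo-dilator.

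The \po-dilator axioms and normality of $W_\alpha$ are already recorded in the discussion following Definition~\ref{def:W_alpha}, so the real work is to show that $W_\alpha$ preserves well partial orders, i.e.\ that $W_\alpha(X)=1+(\alpha\times X)$ is a well partial order whenever $X$ is. Using the infinite pigeonhole principle for two colours, which is available in $\rca_0$, a bad sequence in a sum yields a bad sequence in one of the summands, so it suffices to handle $\alpha\times X$. I would isolate as the crux the combinatorial fact~$(\ast)$: every sequence $(\beta_i)_{i\in\n}$ in $\alpha$ has an infinite subsequence that is weakly ascending in $\alpha$. Granting $(\ast)$, any bad sequence $((\beta_i,x_i))_{i\in\n}$ in $\alpha\times X$ is refuted by passing to a subsequence along which the first components are weakly ascending: for indices $i<j$ of that subsequence, $\beta_i\leq_\alpha\beta_j$ together with badness forces $x_i\nleq_X x_j$, so the second components form a bad sequence in $X$, contradicting that $X$ is a well partial order.

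Proving $(\ast)$ over $\rca_0$ is the step I expect to be the main obstacle, and it is where the ascending-suborder property of $\alpha$ and the infinite pigeonhole principle have to be combined. Classically $(\ast)$ is trivial, but extracting the subsequence effectively is a monotone-subsequence statement that $\rca_0$ cannot prove unaided. I would split on whether the sequence $(\beta_i)$ takes only finitely many or infinitely many values. In the first case the infinite pigeonhole principle produces a value occurring infinitely often, whose constant subsequence is weakly ascending; this is precisely where $\rca_0$ alone breaks down, the number of occurring values being possibly nonstandard, so the pigeonhole principle is indispensable here. In the second case a $\Delta^0_1$ search enumerates an infinite set $S\subseteq\alpha$ of pairwise distinct occurring values with strictly increasing witnessing indices; the ascending-suborder property applied to $S$ yields a strictly $\alpha$-ascending sequence inside $S$, and since its associated indices are distinct (hence unbounded) natural numbers, a second $\Delta^0_1$ search selects from them an increasing subsequence, producing a subsequence of $(\beta_i)$ that rises both in index and in $\alpha$-value.

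With $(\ast)$ secured, $W_\alpha$ is a normal \wpo-dilator, the uniform Kruskal theorem provides a Kruskal fixed point $X$ of $W_\alpha$ that is a well partial order, and Lemma~\ref{lem:2_alpha_W_alpha} quasi-embeds $2^\alpha$ into $X$; thus $2^\alpha$ is a well partial order and, being linear, well-founded. As $\alpha$ ranged over the whole restricted class, Theorem~\ref{thm:exp_perfect} yields arithmetical comprehension. The genuinely delicate part remains the $\rca_0$ bookkeeping inside $(\ast)$: keeping both selection searches $\Delta^0_1$ and pinning down exactly where the infinite pigeonhole principle (equivalently $B\Sigma^0_2$) is used, so that the entire verification that $W_\alpha$ is a \wpo-dilator can be carried out over $\rca_0$ once the pigeonhole principle has been extracted from the uniform Kruskal theorem.
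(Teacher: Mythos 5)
Your proposal is correct and follows essentially the same route as the paper's proof: extract the infinite pigeonhole principle via Proposition~\ref{prop:ukt_ipp}, combine it with the ascending-suborder hypothesis on~$\alpha$ to show that~$W_\alpha$ is a (normal) \wpo-dilator, and then feed the resulting Kruskal fixed point into Lemma~\ref{lem:2_alpha_W_alpha} and Theorem~\ref{thm:exp_perfect}; your lemma~$(\ast)$ is just a repackaging of the paper's in-line argument, and your case split on finitely versus infinitely many values matches the paper's split on bounded versus unbounded codes. The one point to tighten is that in the second case the infinite collection~$S$ of occurring values must be enumerated with strictly increasing \emph{codes}, not merely strictly increasing witnessing indices, for $\Delta^0_1$-comprehension to yield it as a set (otherwise you are forming the range of an injection, the very thing $\rca_0$ cannot do); this is exactly how the paper arranges its suborder~$B$, and it is recoverable in your setup since infinitely many distinct values forces the codes to be unbounded.
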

\begin{proof}
	Let~${\alpha}$ be an arbitrary well order such that each infinite suborder of~${\alpha}$ contains a strictly ascending sequence. In order to derive arithmetical comprehension using Theorem~\ref{thm:exp_perfect}, we only have to show that for any such~${\alpha}$, the order~${2^\alpha}$ is well-founded as well.
	
	Consider the normal \po-dilator~${W_\alpha}$ from Definition~\ref{def:W_alpha}. We show that it is already a \wpo-dilator. Let~${X}$ be an arbitrary well partial order and assume, for contradiction, that~${(\beta_n, x_n)_{n \in \n} \subseteq \alpha \times X}$ is a bad sequence in~${W_\alpha(X)}$.\footnote{If~${0 \in W_\alpha(X)}$ occurs in our bad sequence, we can immediately remove it as this member may only appear once.}
	
	First, we use the infinite pigeonhole principle in order to ensure that we can assume the sequence~${(\beta_n)_{n \in \n}}$ to have increasing codes. For this, let~${n \in \n}$ be arbitrary and assume, for contradiction, that the codes of all~${\beta_m}$ for~${m > n}$ are less than or equal to that of~${\beta_n}$. By the infinite pigeonhole principle, which follows from the uniform Kruskal theorem using Proposition~\ref{prop:ukt_ipp}, this yields an element~${\beta' \in \alpha}$ (with a code less than or equal to that of~${\beta_n}$) that is equal to~${\beta_m}$ for infinitely many indices~${m > n}$. W.l.o.g., we can now assume that~${(\beta_n)_{n \in \n}}$ is constant. Then, however,~${(x_n)_{n \in \n}}$ must be bad in~${X}$, which contradicts our assumption that~${X}$ is a well partial order.
	
	We conclude that for each~${n \in \n}$, there exists~${m > n}$ such that the code of~${\beta_m}$ is strictly greater than that of~${\beta_n}$. W.l.o.g., we can now assume that the code of~${\beta_{n+1}}$ is strictly greater than that of~${\beta_n}$, for all~${n \in \n}$. This allows us to construct an infinite suborder~${B \subseteq \alpha}$ that consists of exactly those elements that appear in~${(\beta_n)_{n \in \n}}$. By assumption, this suborder~${B}$ contains a strictly ascending sequence. By moving to this subsequence, we can assume, w.l.o.g., that already~${(\beta_n)_{n \in \n}}$ is strictly ascending. Similar to before, however, this entails that~${X}$ must be bad. Again, this contradicts our assumption that~${X}$ is a well partial order. We conclude that~${W_\alpha}$ is a \wpo-dilator.
	
	Now that~${W_\alpha}$ is a \wpo-dilator, we can apply the uniform Kruskal theorem, which yields a well partial order~${X}$ with a collapse~${\kappa: W_\alpha(X) \to X}$ that together form a Kruskal fixed point of~${W_\alpha}$. Using the quasi embedding from Lemma~\ref{lem:2_alpha_W_alpha}, we see that~${2^\alpha}$ must be well-founded. Hence, since~${\alpha}$ was arbitrary, arithmetical comprehension holds by Theorem~\ref{thm:exp_perfect}.
\end{proof}
\begin{proof}[Proof of Theorem~\ref{thm:UKT_RCA}]
	Concerning the direction from~${\Pi^1_1}$-comprehension to the uniform Kruskal theorem, we simply apply Theorem~\ref{thm:UKT_CAC}. For the other direction, from the uniform Kruskal theorem to~${\Pi^1_1}$-comprehension, we first use Lemma~\ref{lem:UKT_ACA} to bootstrap arithmetical comprehension, which implies the chain antichain principle, and apply Theorem~\ref{thm:UKT_CAC} thereafter.
\end{proof}

\section{Weak K\H{o}nig's lemma and \po-dilators}\label{sec:wkl}

In \cite[Corollary~4.1.20]{UftringPhD}, the author proved that there is no predilator that, over~${\rca_0}$, becomes a dilator if and only if weak K\H{o}nig's lemma holds. In joint work with Anton Freund (cf.~\cite[Theorem~3.1]{FUConservative}), the employed ideas were used to generalize the famous result that~${\wkl_0}$ is conservative over~${\rca_0}$ from~${\Pi^1_1}$-statements to formulas that universally quantify over well orders (and use, like~${\Pi^1_1}$-statements, only arithmetical quantification thereafter).

In \cite[Proposition~5.3.7]{UftringPhD} (see also \cite[Theorem~3.3]{FUConservative}), the author showed, however, that it is possible to characterize weak K\H{o}nig's lemma using the statement that for all well partial orders a certain~${\Sigma^0_1}$-property is satisfied. Hence, the result that weak K\H{o}nig's lemma cannot be characterized using dilators in the way described above does not extend to~${\wpo}$-dilators using the original ideas. Still, we have the following result (cf.~\cite[Proposition~5.3.6]{UftringPhD}):
\begin{theorem}\label{thm:wpo_dilator_wkl}
	There is no~${\po}$-dilator~${W}$ such that, over~${\rca_0}$, weak K\H{o}nig's lemma holds if and only if~${W}$ is a~${\wpo}$-dilator.
\end{theorem}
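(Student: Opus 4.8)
The plan is to prove Theorem~\ref{thm:wpo_dilator_wkl} by contradiction, building on the machinery of Section~\ref{sec:ipp}. Suppose there is a \po-dilator~${W}$ which, over~${\rca_0}$, is a \wpo-dilator exactly when weak K\"onig's lemma holds. The strategy is to exploit the key property separating the two candidate base systems: over~${\rca_0}$, weak K\"onig's lemma is \emph{$\Pi^1_1$-conservative} and, in particular, neither implies nor is implied by the infinite pigeonhole principle in a way that could coincide on all models. More concretely, I would argue that the statement ``$W$ is a \wpo-dilator'' is, by the coding of \po-dilators discussed after Definition~\ref{def:po_dilator}, a $\Pi^1_1$-statement (it universally quantifies over sequences in partial orders~${W(X)}$ and asserts goodness, using only arithmetical quantification thereafter once~${X}$ is a fixed coded order). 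The contradiction will come from showing that such a $\Pi^1_1$-statement cannot be equivalent to~${\wkl_0}$ over~${\rca_0}$.

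The central obstacle, and the heart of the argument, is to locate two models of~${\rca_0}$ that agree on all $\Pi^1_1$-statements but disagree on weak K\"onig's lemma, or equivalently, to use the conservativity of~${\wkl_0}$ over~${\rca_0}$ for $\Pi^1_1$-formulas (cf.~\cite[Corollary~IX.2.6]{SimpsonBook}). First I would observe that if ``$W$ is a \wpo-dilator'' were a genuine $\Pi^1_1$-statement~${\varphi}$, then the hypothesized equivalence~${\wkl \leftrightarrow \varphi}$ over~${\rca_0}$ would, combined with $\Pi^1_1$-conservativity, force~${\rca_0 \vdash \varphi}$: indeed, since~${\wkl_0}$ proves~${\varphi}$ and~${\varphi}$ is $\Pi^1_1$, conservativity yields~${\rca_0 \vdash \varphi}$, whence~${\rca_0 \vdash \wkl}$ by the assumed reverse implication --- but this is false, as~${\wkl}$ is not provable in~${\rca_0}$ (there exists a model of~${\rca_0}$ refuting it, e.g.\ a model whose second-order part consists of the computable sets). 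This already delivers the contradiction, provided the complexity claim holds.

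Hence the real work is establishing that ``$W$ is a \wpo-dilator'' is $\Pi^1_1$, and here I would separate two subtleties. Since~${W}$ is a fixed coded \po-dilator, the support condition lets us reconstruct each~${W(X)}$ arithmetically from~${X}$ and the code of~${W}$. The assertion that~${W}$ preserves \wpo's ranges over all well partial orders~${X}$; the hypothesis ``$X$ is a \wpo'' is itself $\Pi^1_1$, so naively the statement is $\Pi^1_2$. To push it down to $\Pi^1_1$, I would invoke the infinite pigeonhole principle route of Proposition~\ref{prop:ukt_ipp} and the surrounding lemmas, or more directly reduce to a single universal well order as in the reification results of~\cite{FreundReification}: preservation of all \wpo's is controlled by behavior on a canonical class of inputs, collapsing the outer $\Pi^1_1$ input-hypothesis. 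The expected main obstacle is precisely this complexity-reduction step, because a priori the input quantifier over \wpo's introduces genuine $\Pi^1_1$ content; the cleanest resolution is likely to note, following~\cite[Proposition~5.3.7]{UftringPhD} and \cite[Theorem~3.3]{FUConservative}, that \wpo-preservation admits a formulation in which all occurrences of ``is a \wpo'' can be replaced by a~${\Sigma^0_1}$ surrogate relative to a fixed enumeration, leaving only an outer universal number-quantifier and an arithmetical matrix --- i.e.\ a $\Pi^1_1$ form. Once that is in hand, the conservativity argument of the previous paragraph closes the proof.
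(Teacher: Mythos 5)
Your argument stands or falls on the claim that ``$W$ is a \wpo-dilator'' is (equivalent over $\rca_0$ to) a $\Pi^1_1$-statement, and this is precisely the step that fails. The assertion has the form ``for all $X$, if $X$ is a well partial order then $W(X)$ is a well partial order''; the hypothesis ``$X$ is a \wpo'' is itself $\Pi^1_1$ and occurs negatively, so the statement is a priori $\Pi^1_2$, and there is no general device for collapsing the quantifier over \wpo's. Worse, the result you cite to effect this collapse, \cite[Proposition~5.3.7]{UftringPhD} (see also \cite[Theorem~3.3]{FUConservative}), says the \emph{opposite} of what you need: it shows that weak K\H{o}nig's lemma \emph{is} equivalent over $\rca_0$ to a statement of the form ``for all well partial orders $X$, a certain $\Sigma^0_1$-property holds''. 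So $\wkl_0$ is \emph{not} conservative over $\rca_0$ for the class of formulas you would reduce to, and the conservativity argument that works for (pre)dilators on linear orders --- where one quantifies over well \emph{orders}, a class for which \cite[Theorem~3.1]{FUConservative} does extend conservativity --- is exactly the approach that the introduction to this section explains cannot be transferred to \wpo-dilators. Your first paragraph's logic (``$\wkl_0 \vdash \varphi$, $\varphi$ is $\Pi^1_1$, hence $\rca_0 \vdash \varphi$, hence $\rca_0 \vdash \wkl$'') would be fine if the complexity claim held, but it does not, and you give no actual proof of it.

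The paper's proof takes a different, non-uniform route: a case distinction on whether $W$ is unary. If $W$ is non-unary, this is witnessed on a finite partial order and is hence a $\Sigma^0_1$-fact provable in $\rca_0$; Lemma~\ref{lem:ipp_unary} then shows that ``$W$ is a \wpo-dilator'' implies the infinite pigeonhole principle, so the hypothesized equivalence would make $\wkl_0$ prove the infinite pigeonhole principle --- refuted by $\Pi^1_1$-conservativity of $\wkl_0$ together with \cite[Proposition~7]{ParisKirby}. (Note that conservativity is applied here only to the genuinely $\Pi^1_1$ pigeonhole instance, not to ``$W$ is a \wpo-dilator'' itself.) If $W$ is unary, one checks that ``$W$ is unary and monotone and $W(1)$ is a \wpo'' is $\Pi^1_1$ and true, hence holds in every $\omega$-model; in an $\omega$-model of the chain antichain principle refuting weak K\H{o}nig's lemma (Hirschfeldt--Shore, \cite[Corollary~3.11]{HS}), Lemma~\ref{lem:cac_unary_monotone} forces $W$ to be a \wpo-dilator there, contradicting the equivalence. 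To repair your proposal you would need to supply both halves of this case analysis; the uniform conservativity shortcut is not available.
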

In order to prove this, we proceed by a case distinction on whether the \po-dilator in question is \emph{unary} or not. Let us write~${|X|}$ for the cardinality of a set~${X}$. Then, consider the following:
\begin{definition}[{\cite[Definition~3.1]{FUKruskal}}]
	We call a~${\po}$-dilator~${W}$ \emph{unary} if and only if for each partial order~${X}$ and element~${\sigma \in W(X)}$, we have~${|\supp_X(\sigma)| \leq 1}$.
\end{definition}
\begin{lemma}\label{lem:ipp_unary}
	The system~${\rca_0}$ proves that if there is a non-unary \wpo-dilator, then the infinite pigeonhole principle holds.
\end{lemma}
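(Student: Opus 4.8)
The plan is to reduce the claim to the well-partial-ordering principle of Lemma~\ref{lem:ipp_alpha_times_n}: it suffices to prove in $\rca_0$ that, from a non-unary \wpo-dilator $W$, one can derive that $\alpha \times n$ is a well partial order for every well order $\alpha$ and every $n \in \n$. To this end I would first unpack non-unarity: there are a partial order $X_0$ and an element $\sigma \in W(X_0)$ with $|\supp_{X_0}(\sigma)| \geq 2$. Applying the support condition to the inclusion of $\supp_{X_0}(\sigma)$ into $X_0$, I may replace $X_0$ by the finite suborder $S := \supp_{X_0}(\sigma)$ and obtain $\sigma_0 \in W(S)$ with $\supp_S(\sigma_0) = S$ and $|S| \geq 2$. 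I then fix two distinct support elements $s \neq t$ of $S$.

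Next I would encode $\alpha \times n$ into a single application of $W$ to a well partial order. Given $\alpha$ and $n$, I build a target order $Z$ as a threefold sum $\alpha + n + R$, where $R$ is an isomorphic copy of the remaining support $S \setminus \{s,t\}$ (this is the sum $X + Y + Z$ whose elements are renamed in the preliminaries). For each $(\beta,j) \in \alpha \times n$ I define a map $g_{\beta,j} \colon S \to Z$ that sends $s$ into the $\alpha$-summand as $(0,\beta)$, sends $t$ into the $n$-summand as $(1,j)$, and fixes the remaining support elements in the copy $R$. Since sums of well orders, and sums of a well order with a finite order, are well orders, $Z$ is a well partial order (provable in $\rca_0$). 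When $s$ and $t$ are incomparable in $S$, the incomparable summands $\alpha$ and $n$ accommodate them directly and $g_{\beta,j}$ is an embedding; the third summand $R$ is what lets the remaining support elements be placed without creating spurious relations to the two varying coordinates.

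The core of the argument is then to show that $\Phi \colon \alpha \times n \to W(Z)$, $\Phi(\beta,j) := W(g_{\beta,j})(\sigma_0)$, is a quasi embedding. By naturality of $\supp$, the support of $\Phi(\beta,j)$ is exactly $\{(0,\beta),(1,j)\} \cup g_{\beta,j}(R)$. Given a comparison $\Phi(\beta,j) \leq \Phi(\beta',j')$, I would amalgamate the two supports into the finite suborder $U \subseteq Z$ that they span; writing the two maps as $e \circ \tilde g$ and $e \circ \tilde g'$ for the inclusion $e \colon U \hookrightarrow Z$ and using that $W(e)$ is an embedding, hence reflects the order, the comparison descends to $W(\tilde g)(\sigma_0) \leq W(\tilde g')(\sigma_0)$ in $W(U)$. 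The decisive point is that in $Z$ the $\alpha$-summand and the $n$-summand carry no cross relations, so the support condition pins each support element to its own summand; this rules out the ``diagonal'' possibility of matching the $\alpha$-coordinate of one element to the $n$-coordinate of the other, and forces both $\beta \leq_\alpha \beta'$ and $j \leq j'$, that is, $(\beta,j) \leq_{\alpha \times n} (\beta',j')$.

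Finally, since $W$ is a \wpo-dilator and $Z$ is a well partial order, $W(Z)$ is a well partial order; because quasi embeddings reflect this property, so is $\alpha \times n$. With $\alpha$ and $n$ arbitrary, Lemma~\ref{lem:ipp_alpha_times_n} then yields the infinite pigeonhole principle. I expect the reflection step to be the main obstacle: making precise, for an abstract \po-dilator \emph{without} assuming normality, that a comparison in $W(Z)$ cannot mix the two coordinates, and in particular treating the case where the chosen support elements $s,t$ are comparable in $S$. In that case the two summands hosting them can no longer be kept incomparable, so the clean ``no cross relations'' argument does not apply verbatim, and a more careful placement of $s$ and $t$ (exploiting the third summand to keep the two coordinates independent) is required.
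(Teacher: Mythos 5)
Your reduction to Lemma~\ref{lem:ipp_alpha_times_n}, the restriction to the finite support order $S$, and the maps $g_{\beta,j}$ into a threefold sum closely match the paper's setup. The gap is exactly the step you flag as ``the main obstacle'', and it cannot be patched by a more careful placement of $s$ and $t$: for a \po-dilator that is \emph{not} assumed normal, nothing forces a comparison $W(g_{\beta,j})(\sigma_0) \leq W(g_{\beta',j'})(\sigma_0)$ to respect supports. The support condition and the naturality of $\supp$ only describe the \emph{range} of $W(e)$ for embeddings $e$ and how supports transform under morphisms; they place no constraint on which pairs of elements of $W(Z)$ are comparable. The implication ``$\sigma \leq \tau$ entails $\supp_Z(\sigma) \leq_\fin \supp_Z(\tau)$'', which is what you need to rule out the diagonal matching and to force $\beta \leq \beta'$ and $j \leq j'$, is precisely the normality condition of Definition~\ref{def:normal} --- and the entire point of Lemma~\ref{lem:ipp_unary}, as the paper remarks, is to improve on \cite[Lemma~3.2]{FUKruskal}, where the analogous statement is proved for \emph{normal} \wpo-dilators (there your argument would essentially go through). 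So ``the support condition pins each support element to its own summand'' is a non sequitur, and the claim that $\Phi$ is a quasi embedding remains unproven; the acknowledged open case of comparable $s,t$ is a second, smaller hole.

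The paper's proof sidesteps the reflection problem entirely: it never shows that $\alpha \times n$ quasi-embeds into $W$ of anything. Instead it fixes in advance, in an extension $X' = X \cup \{x', y'\}$ of the finite support order, two elements $\sigma_x = W(f_x)(\sigma)$ and $\sigma_y = W(f_y)(\sigma)$ with $\sigma_x \nleq \sigma_y$ (possible w.l.o.g.\ since their supports differ), replaces $n$ by the antichain $N$, and argues by contradiction from a bad sequence $((\beta_i, n_i))_{i \in \n}$: a good pair $W(g_i)(\sigma) \leq W(g_j)(\sigma)$ in $W(Y)$ is refuted in the case $n_i = n_j$, $\beta_i > \beta_j$ by the monotonicity of \wpo-dilators (Lemma~\ref{lem:monotone}), and in the case $n_i \neq n_j$ by factoring a pointwise-smaller $g'_i$ and $g_j$ as $h \circ f_x$ and $h \circ f_y$ for a quasi embedding $h \colon X' \to Y$, so that reflection along $W(h)$ yields the forbidden $\sigma_x \leq \sigma_y$. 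To salvage your approach you would need to import both ingredients --- monotonicity and the $\sigma_x \nleq \sigma_y$ device --- at which point you have reconstructed the paper's argument.
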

A similar result has already been shown by Anton Freund and the author in \cite[Lemma~3.2]{FUKruskal} for the ascending descending sequence principle and \emph{normal} \wpo-dilators. This, already, would yield Theorem~\ref{thm:wpo_dilator_wkl} if we restricted ourselves to normal \po-dilators. As we will see, Lemma~\ref{lem:ipp_unary} even allows us to state this theorem for \emph{all}~${\po}$-dilators.

Before we continue with the proof of Lemma~\ref{lem:ipp_unary}, we require some further notion:
\begin{definition}[{\cite[Definition~1.6]{FUKruskal}}]
	A \po-dilator~${W: \po \to \po}$ is \emph{monotone} if for any two quasi embeddings~${f, g: X \to Y}$ for partial orders~${X}$ and~${Y}$ with~${f \leq g}$, i.e.~satisfying~${f(x) \leq g(x)}$ for all~${x \in X}$, we have~${W(f) \leq W(g)}$.
\end{definition}
\begin{lemma}[{\cite[Proposition~2.1]{FUKruskal}}]\label{lem:monotone}
	The system~${\rca_0}$ proves that any \wpo-dilator is monotone.
\end{lemma}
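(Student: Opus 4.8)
The plan is to prove the statement by establishing the inequality $W(f)(\sigma) \le W(g)(\sigma)$ pointwise, for quasi embeddings $f \le g \colon X \to Y$ and an arbitrary $\sigma \in W(X)$. First I would reduce to a finite situation. Let $F := \supp_X(\sigma)$, which is finite, and let $\iota \colon F \hookrightarrow X$ be the inclusion. Since $\supp_X(\sigma) \subseteq \rng(\iota)$, the support condition provides some $\sigma_0 \in W(F)$ with $W(\iota)(\sigma_0) = \sigma$; by functoriality it then suffices to treat $f \circ \iota$ and $g \circ \iota$, so we may assume $X = F$ is finite. Next, writing $G := f(F) \cup g(F) \subseteq Y$, the inclusion $G \hookrightarrow Y$ is an embedding, hence $W(G) \to W(Y)$ both preserves and reflects the order; thus it is enough to prove $W(f)(\sigma_0) \le W(g)(\sigma_0)$ with $f, g \colon F \to G$ both mapping into the finite order $G$. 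Crucially, finite partial orders are well partial orders in $\rca_0$ (by bounded finite pigeonhole), so the \wpo-dilator hypothesis yields that $W(G)$ is a well partial order. This is the only place the hypothesis enters, and it is what makes the statement fail for general \po-dilators.

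The core mechanism I would use to exploit that $W$ preserves well partial orders is to manufacture an infinite well partial order and read a comparison off a good pair. Consider the stacked sum $V := F \cdot \omega$ on $F \times \n$, ordered by $(x, n) \le_V (x', m)$ if and only if $n < m$, or else $n = m$ and $x \le_F x'$. One checks in $\rca_0$ that $V$ is a well partial order: given any sequence, its sequence of levels is either strictly increasing at some pair of indices -- yielding a comparison outright -- or eventually constant by well-foundedness of $\omega$, in which case the finite fibre $F$ forces a repetition. The level inclusions $e_n \colon F \to V$, $x \mapsto (x, n)$, are embeddings, so applying $W$ and invoking that $W(V)$ is a well partial order gives indices $n < m$ with $W(e_n)(\sigma_0) \le W(e_m)(\sigma_0)$. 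Pulling this back along the embedding of the two relevant levels $F \times \{n, m\} \cong F \oplus F$ into $V$ -- an embedding, so $W$ of it reflects the order -- I obtain $W(\lambda_L)(\sigma_0) \le W(\lambda_U)(\sigma_0)$ in $W(F \oplus F)$, where $\lambda_L, \lambda_U \colon F \to F \oplus F$ are the lower and upper copy inclusions.

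The hard part will be transporting this last comparison back into $W(G)$ as $W(f)(\sigma_0) \le W(g)(\sigma_0)$. This asks for a map $F \oplus F \to G$ sending the lower copy by $f$ and the upper copy by $g$ on which $W$ acts order-preservingly. Here the obstruction is genuine: since quasi embeddings only \emph{reflect} the order, $f$ and $g$ need not \emph{preserve} it, and the presence of uncontrolled cross comparisons $g(x) \le_G f(x')$ for $x \ne x'$ means the obvious candidate is neither an embedding (blocking the push-forward) nor even a quasi embedding (so $W$ cannot be applied to it at all). I expect this to be the central difficulty. In the case $|F| = 1$ it evaporates -- there are no cross comparisons, $f(\ast) \le g(\ast)$ is automatically a comparison in $G$, and the two-element chain embeds into $G$ honestly -- so monotonicity follows at once. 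To handle general $F$, my plan is to induct on $|F|$, moving the support elements up from their $f$-values to their $g$-values one at a time and using the tower $V$ at each step to absorb the cross comparisons that a single move introduces; equivalently, one engineers the intermediate well partial order so that its cross-level relations match precisely those that $f$ and $g$ induce inside $G$, so that the pull-back reflects correctly. Arranging this configuration to be simultaneously a provable $\rca_0$-well partial order and to carry exactly the right relations is, I expect, the technical heart of the argument.
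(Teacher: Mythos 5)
There is nothing to compare against inside the paper: Lemma~\ref{lem:monotone} is imported from \cite[Proposition~2.1]{FUKruskal} without proof, so your proposal must stand on its own, and as written it does not -- you explicitly stop at what you call the ``central difficulty'', and that difficulty is the entire content of the lemma. Your preliminary reductions are sound: restricting to the finite support $F$ of $\sigma$ via the support condition, corestricting to $G = f(F) \cup g(F)$, the observation that $\rca_0$ proves finite partial orders and your tower $V = F \cdot \omega$ to be well partial orders, and the extraction of $n < m$ with $W(e_n)(\sigma_0) \leq W(e_m)(\sigma_0)$ followed by pullback to the two-level suborder. But the inequality you land on in $W$ of the two-level stack (lower copy of $F$ entirely below the upper copy) is the wrong datum: to push it into $W(G)$ you need a map $h$ on the stack with $h \circ \lambda_L = f$ and $h \circ \lambda_U = g$ such that $W(h)$ \emph{preserves} the order, and since $\po$-dilators act order-preservingly only on embeddings, $h$ would have to be an embedding. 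That forces both that $f$ and $g$ preserve $\leq_F$ (quasi embeddings only reflect it) and that $f(x) \leq_G g(x')$ holds for \emph{all} $x, x'$ (only $f(x) \leq g(x)$ is given). The total cross-order of your tower has simply discarded the information you need, and no massaging recovers it.

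The fallback -- induction on $|F|$, moving one support element at a time from its $f$-value to its $g$-value -- reproduces the same obstruction rather than reducing it. First, the intermediate maps acting by $g$ on some elements and by $f$ on the rest need not be quasi embeddings: a cross comparison $f(x) \leq_G g(x')$ with $x \nleq_F x'$ destroys reflection, so $W$ cannot even be applied to them. Second, the plan to ``engineer the intermediate well partial order so that its cross-level relations match precisely those that $f$ and $g$ induce inside $G$'' is internally inconsistent for any infinite tower: a good pair produced by the $\wpo$-hypothesis can fall on \emph{any} two levels (restricting which pairs may carry relations immediately creates bad sequences), so a fixed level must serve both as the lower ``$f$-copy'' and the upper ``$g$-copy'', forcing its internal order to equal both the $f$-pullback and the $g$-pullback of $\leq_G$, which differ in general. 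Moreover the desired direct cross-relation ``$(x, m) \leq (x', n) \Leftrightarrow f(x) \leq_G g(x')$'' is not transitive (one cannot chain $f(x) \leq g(u)$ with $f(u) \leq g(x')$), and passing to the transitive closure destroys exactly the relation-matching that your final transport step -- again an embedding into $G$ -- requires. Even with a single moving element, the static part's relations to it face the same lower/upper conflict. So the reductions are fine, but the core mechanism converting a good pair into the specific inequality $W(f)(\sigma) \leq W(g)(\sigma)$ is missing, and the sketched route to it breaks down; this is precisely the part for which one must consult the actual argument behind \cite[Proposition~2.1]{FUKruskal}.
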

Now, everything is ready for proving our result:
\begin{proof}[Proof of Lemma~\ref{lem:ipp_unary}]
	Let~${W}$ be a non-unary \wpo-dilator witnessed by a partial order~${X}$ and an element~${\sigma \in W(X)}$ with~${|\supp_X(\sigma)| \geq 2}$. Using the canonical embedding from~${\supp_X(\sigma)}$ into~${X}$ together with the support condition, we can assume that already~${\supp_X(\sigma) = X}$ holds.
	
	We pick two distinct elements~${x, y \in X}$ and extend~${X}$ to~${X' := X \cup \{x', y'\}}$, where~${x'}$ and~${y'}$ are two distinct new elements that were not contained in~${X}$. We set~${z < x'}$ (or~${z < y'}$) if and only if~${z \leq x}$ (or~${z \leq y}$) holds, for all~${z \in X}$. It is easy to see that~${X'}$ is a partial order. We define two functions~${f_x, f_y: X \to X'}$ with
	\begin{equation*}
		f_x(z) :=
		\begin{cases}
			x' & \text{if~${z = x}$,}\\
			z & \text{otherwise,}
		\end{cases}
		\qquad
		f_y(z) :=
		\begin{cases}
			y' & \text{if~${z = y}$,}\\
			z & \text{otherwise,}
		\end{cases}
	\end{equation*}
	for all~${z \in X}$. It is easy to see that these functions are (quasi) embeddings and, hence, we can define elements~${\sigma_x := W(f_x)(\sigma)}$ and~${\sigma_y := W(f_y)(\sigma)}$ in~${W(X')}$. Since their supports are different, we conclude that~${\sigma_x}$ and~${\sigma_y}$ are distinct elements. W.l.o.g., we have~${\sigma_x \nleq \sigma_y}$.
	
	Assume, for contradiction, that the infinite pigeonhole principle does not hold. By Lemma~\ref{lem:ipp_alpha_times_n}, there is a well order~${\alpha}$ together with a natural number~${n \in \n}$ such that~${\alpha \times n}$ contains a bad sequence~${((\beta_i, n_i))_{i \in \n}}$. Let~${N}$ be the finite antichain of size~${n}$. Clearly, the same bad sequence also lives in~${\alpha \times N}$. W.l.o.g., we can assume~${\alpha}$ to contain a smallest element~${\bot}$ that does not occur in this sequence.
	
	Let~${Y := N + \alpha + X \setminus \{x, y\}}$. Either using the fact that~${N}$ and~${X}$ are finite or using the infinite pigeonhole principle for three colors, it can be seen that~${Y}$ is a well partial order.
	For each number~${i \in \n}$, we define a function~${g_i: X \to Y}$ with
	\begin{equation*}
		g_i(z) :=
		\begin{cases}
			(0, n_i) & \text{if~${z = x}$,}\\
			(1, \beta_i) & \text{if~${z = y}$,}\\
			(2, z) & \text{otherwise,}
		\end{cases}
	\end{equation*}
	for all~${z \in X}$. It is not hard to see that~${g_i}$ is a quasi embedding for each~${i \in \n}$. Using the assumption that~${W}$ is a \wpo-dilator, we conclude that~${W(Y)}$ must be a well partial order. Hence, there are indices~${i < j}$ such that~${W(g_i)(\sigma) \leq W(g_j)(\sigma)}$ holds. By assumption, we have~${(\beta_i, n_i) \nleq (\beta_j, n_j)}$, which implies one of~${n_i \neq n_j}$ or both~${n_i = n_j}$ and~${\beta_i \nleq \beta_j}$.
	
	In the latter case, we have~${\beta_i > \beta_j}$ since~${\alpha}$ is a well order. By definition of the family~${(g_i)_{i \in \n}}$, this leads to~${g_i \geq g_j}$ and, because~${W}$ is monotone as a \wpo-dilator by Lemma~\ref{lem:monotone}, we arrive at the contradiction~${W(g_i)(\sigma) > W(g_j)(\sigma)}$. Notice that this inequality is strict since both elements have distinct supports.
	
	In the former case with~${n_i \neq n_j}$, we consider the function~${g'_i: X \to Y}$ with
	\begin{equation*}
		g'_i(z) :=
		\begin{cases}
			(0, n_i) & \text{if~${z = x}$,}\\
			(1, \bot) & \text{if~${z = y}$,}\\
			(2, z) & \text{otherwise,}
		\end{cases}
	\end{equation*}
	for all~${z \in X}$. Notice that it is defined like~${g_i}$ but maps~${y}$ to~${(1, \bot)}$ instead of~${(1, \beta_i)}$. Using a similar argument as before, we see that~${g'_i}$ is a quasi embedding.
	In order to arrive at our contradiction, we define a function~${h: X' \to Y}$ with
	\begin{equation*}
		h(z) :=
		\begin{cases}
			(0, n_j) & \text{if~${z = x}$,}\\
			(0, n_i) & \text{if~${z = x'}$,}\\
			(1, \bot) & \text{if~${z = y}$,}\\
			(1, \beta_j) & \text{if~${z = y'}$,}\\
			(2, z) & \text{otherwise,}
		\end{cases}
	\end{equation*}
	for all~${z \in X'}$. Using the facts~${n_i \neq n_j}$ and~${\bot < \beta_j}$, we derive that~${h}$ is a quasi embedding. It is easy to see that~${g'_i = h \circ f_x}$ and~${g_j = h \circ f_y}$ hold. Since~${W}$ is monotone, our assumption~${W(g_i)(\sigma) \leq W(g_j)(\sigma)}$ leads to
	\begin{equation*}
		W(h \circ f_x)(\sigma) = W(g'_i)(\sigma) \leq W(g_i)(\sigma) \leq W(g_j)(\sigma) = W(h \circ f_y)(\sigma)\period
	\end{equation*}
	Now,~${W(h)}$ is a quasi embedding, which entails
	\begin{equation*}
		\sigma_x = W(f_x)(\sigma) \leq W(f_y)(\sigma) = \sigma_y\period
	\end{equation*}
	Recall that we made the assumption~${\sigma_x \nleq \sigma_y}$.
	From this contradiction, we conclude that the infinite pigeonhole principle must hold.
\end{proof}
As a final ingredient for our proof of Theorem~\ref{thm:wpo_dilator_wkl}, we show that a particular class of \po-dilators already consists of \wpo-dilators under a weak assumption:
\begin{lemma}\label{lem:cac_unary_monotone}
	The system~${\rca_0}$ together with the chain antichain principle proves any unary and monotone \po-dilator~${W}$ to be a \wpo-dilator if~${W(1)}$ is a well partial order.
\end{lemma}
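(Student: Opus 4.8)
The plan is to take a well partial order $X$ and a putative bad sequence $(\sigma_n)_{n \in \n}$ in $W(X)$ and derive a contradiction, using the product of two well partial orders as the organizing device. First I would dispose of the empty-support case: the embedding $\emptyset \hookrightarrow 1$ yields a quasi embedding $W(\emptyset) \to W(1)$ (as $W$ preserves embeddings), and since $W(1)$ is a well partial order and quasi embeddings reflect this property, $W(\emptyset)$ is a well partial order too. Thus any subsequence of $(\sigma_n)_{n \in \n}$ consisting of empty-support elements lies in $\rng(W(\emptyset \hookrightarrow X))$ and is therefore good. Using the infinite pigeonhole principle for two colors (available in $\rca_0$) and unarity, I may then pass to a subsequence in which every $\sigma_n$ has support exactly $\{x_n\}$ for some $x_n \in X$. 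For each such $n$, letting $\iota_{x_n} \colon 1 \to X$ denote the embedding sending the single point to $x_n$, the support condition gives a unique $\bar\sigma_n \in W(1)$ with $W(\iota_{x_n})(\bar\sigma_n) = \sigma_n$; extracting $x_n$ and $\bar\sigma_n$ is a routine $\Delta^0_1$-computation from the code of $W$.

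The technical heart is the claim that $x_n \leq x_m$ together with $\bar\sigma_n \leq \bar\sigma_m$ forces $\sigma_n \leq \sigma_m$, so that a good pair in a suitable product collapses to a good pair of the original sequence. To see this, note that $x_n \leq x_m$ is exactly $\iota_{x_n} \leq \iota_{x_m}$ pointwise, and both maps are (quasi) embeddings out of $1$; hence monotonicity of $W$ (our hypothesis) yields $W(\iota_{x_n}) \leq W(\iota_{x_m})$, so that $\sigma_n = W(\iota_{x_n})(\bar\sigma_n) \leq W(\iota_{x_m})(\bar\sigma_n)$. On the other hand $W(\iota_{x_m})$ is order preserving, being $W$ of an embedding, so $\bar\sigma_n \leq \bar\sigma_m$ gives $W(\iota_{x_m})(\bar\sigma_n) \leq W(\iota_{x_m})(\bar\sigma_m) = \sigma_m$. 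Chaining the two inequalities yields $\sigma_n \leq \sigma_m$, as desired. It is here that both the monotonicity hypothesis (for the $X$-coordinate) and embedding preservation (for the $W(1)$-coordinate) are genuinely used.

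To finish, I would consider the sequence of pairs $(x_n, \bar\sigma_n)_{n \in \n}$ in the product $X \times W(1)$. Both factors are well partial orders ($X$ by assumption, $W(1)$ by hypothesis), and this is the single place where the chain antichain principle enters: it proves that the product of two well partial orders is again a well partial order (cf.\ the discussion in the preliminaries, attributed to \cite{Towsner}; concretely, ordering $\n$ by $n \preceq m \Leftrightarrow n \leq m \wedge x_n \leq x_m$ and applying the chain antichain principle, an antichain produces a bad sequence in $X$ while a chain reduces badness to the factor $W(1)$). Consequently there are indices $n < m$ with $(x_n, \bar\sigma_n) \leq (x_m, \bar\sigma_m)$, i.e.\ $x_n \leq x_m$ and $\bar\sigma_n \leq \bar\sigma_m$, and by the previous paragraph $\sigma_n \leq \sigma_m$, contradicting the badness of $(\sigma_n)_{n \in \n}$.

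The main obstacle I anticipate is the coordinate-separation step: one must arrange that comparability in $W(X)$ is controlled coordinatewise by the base order $X$ and the ``shape'' order $W(1)$, and verify that monotonicity supplies precisely the missing implication from componentwise domination to domination in $W(X)$. Once this is in place, the reduction to products of well partial orders is clean, and the role of the chain antichain principle is pinpointed to exactly that product preservation.
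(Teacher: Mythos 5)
Your proposal is correct and follows essentially the same route as the paper: the paper packages your key step as a quasi embedding of $W(X)$ into $(W(0)+W(1))\times X$, and your central claim (that $x_n \leq x_m$ together with $\bar\sigma_n \leq \bar\sigma_m$ yields $\sigma_n \leq \sigma_m$, via monotonicity for the base coordinate and embedding preservation for the $W(1)$-coordinate) is exactly the paper's verification that this map reflects the order, with the chain antichain principle entering in both cases only through product preservation. Your handling of the empty-support elements by the pigeonhole principle for two colors, versus the paper's dummy summand $W(0)$ paired with a fixed witness $z \in X$, is a cosmetic difference.
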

Compare this to \cite[Lemma~3.3]{FUKruskal}, where we showed that this class of \po-dilators has Kruskal fixed points that are well partial orders if and only if arithmetical comprehension is available.
\begin{proof}[Proof of Lemma~\ref{lem:cac_unary_monotone}]
	Using the canonical embedding from~${0}$ into~${1}$, we see that~${W(0)}$ must also be a well partial order. By the chain antichain principle, which implies that products of well partial orders are, again, well partial orders, we know that~${(W(0) + W(1)) \times X}$ contains no bad sequences for any well partial order~${X}$. 
	We have already seen that~${W(X)}$ is a well partial order if~${X}$ is empty. Now, if~${X}$ is a nonempty well partial order (witnessed by some element~${z \in X}$), then we derive our claim by constructing a quasi embedding~${f: W(X) \to (W(0) + W(1)) \times X}$.
	Let~${\iota: 0 \to X}$ denote the unique embedding with this signature.
	We define
	\begin{equation*}
		f(\sigma) :=
		\begin{cases}
			((0, \sigma_0), z) & \text{if~${\sigma = W(\iota)(\sigma_0)}$,}\\
			((1, \sigma_0), x) & \parbox[t]{19em}{if~${\sigma = W(g_x)(\sigma_0)}$ where~${g_x: 1 \to X}$ maps~${0}$\\to the unique element in~${\supp_X(\sigma)}$,}
		\end{cases}
	\end{equation*}
	for all~${x \in X}$. Since our \po-dilator~${W}$ is unary, each~${\sigma \in W(X)}$ either has an empty support and lies in the range of~${W(\iota)}$ or it has exactly one element~${x \in X}$ in its support and lies in the range of~${W(g_x)}$. Hence, all cases are covered. Moreover, since~${W(\iota)}$ and~${W(g_x)}$ are quasi embeddings,~${\sigma_0}$ is unique in each case.
	
	We show that~${f}$ is a quasi embedding: Consider two elements~${\sigma, \tau \in W(X)}$ with~${f(\sigma) \leq f(\tau)}$. First, by definition of~${f}$, we know that either both elements have an empty support or both of them have a support containing exactly one element each. In the first case, consider~${\sigma_0, \tau_0 \in W(0)}$ with~${\sigma = W(\iota)(\sigma_0)}$ and~${\tau = W(\iota)(\tau_0)}$. By assumption, we have~${\sigma_0 \leq \tau_0}$, which leads to~${\sigma \leq \tau}$ since~${W(\iota)}$ is an embedding. (Recall that \po-dilators preserve embeddings.) In the other case, consider~${x, y \in X}$ with~${\{x\} = \supp_X(\sigma)}$ and~${\{y\} = \supp_X(\tau)}$ as well as~${\sigma_0, \tau_0 \in W(1)}$ with~${\sigma = W(g_x)(\sigma_0)}$ and~${\tau = W(g_y)(\tau_0)}$. By assumption, we have both~${\sigma_0 \leq \tau_0}$ and~${x \leq y}$. Similar to before, the former yields~${W(g_x)(\sigma_0) \leq W(g_x)(\tau_0)}$. By the fact that~${W}$ is monotone, we also have~${W(g_x)(\tau_0) \leq W(g_y)(\tau_0)}$. In conclusion, we arrive at~${\sigma \leq \tau}$.
\end{proof}
\begin{proof}[Proof of Theorem~\ref{thm:wpo_dilator_wkl}]
	Assume, for contradiction, that there exists a~${\po}$-dilator~${W}$ with the claimed property.
	
	We proceed by case distinction on the property of being a unary \po-dilator:
	Assume that it is true (in the standard model) that there exists a partial order~${X}$ together with an element~${\sigma \in W(X)}$ satisfying~${|\supp_X(\sigma)| \geq 2}$. By the support condition, we may assume~${X}$ to be a finite partial order. Now, using the way that~${\po}$-dilators are coded together with the fact that~${X}$ is finite, this statement can be formulated as a~${\Sigma^0_1}$-sentence. Hence, it is provable in~${\rca_0}$. Now, using Lemma~\ref{lem:ipp_unary}, we can argue in~${\rca_0}$ as follows: If weak K\H{o}nig's lemma holds, then~${W}$ is (by assumption) a non-unary~${\wpo}$-dilator, which yields the infinite pigeonhole principle. However, there is a model of~${\wkl_0}$, in which the infinite pigeonhole principle is \emph{not} satisfied (since~${\wkl_0}$ is~${\Pi^1_1}$-conservative over~${\rca_0}$, which does not prove the infinite pigeonhole principle, see~\cite[Corollary~IX.2.6]{SimpsonBook} and \cite[Proposition~7]{ParisKirby}).
	
	This contradiction entails that in the standard model, there cannot exist a partial order~${X}$ and an element~${\sigma \in W(X)}$ with~${|\supp_X(\sigma)| \geq 2}$. Thus, it is true that~${W}$ is a unary \wpo-dilator. In particular,~${W}$ is a unary and monotone \po-dilator (see Lemma~\ref{lem:monotone}) such that~${W(1)}$ is a well partial order. Since these properties are~${\Pi^1_1}$ (see the discussion below \cite[Corollary~2.6]{FUKruskal}), they are satisfied in any~${\omega}$-model of~${\rca_0}$. In particular, this holds for an~${\omega}$-model of the chain antichain principle that rejects weak K\H{on}ig's lemma. The existence of such a model was shown by Hirschfeldt and Shore (cf.~\cite[Corollary~3.11]{HS}). However, in the presence of the chain antichain principle, Lemma~\ref{lem:cac_unary_monotone} tells us that our unary and monotone \po-dilator~${W}$, for which~${W(1)}$ is a well partial order, must automatically be a \wpo-dilator. This contradicts the assumption that weak K\H{o}nig's lemma does not hold in our considered~${\omega}$-model.
\end{proof}
\bibliographystyle{amsplain}
\bibliography{ukt}
\end{document}